\DeclareMathOperator{\Q}{\mathbb{Q}}
\DeclareMathOperator{\Res}{\mathrm{Resultant}}
\DeclareMathOperator{\N}{\mathcal{N}}
\DeclareMathOperator{\Gal}{\mathrm{Gal}}
\DeclareMathOperator{\disc}{\mathrm{disc}}
\newcolumntype{C}[1]{>{\centering\let\newline\\\arraybackslash\hspace{0pt}}m{#1}}
\numberwithin{equation}{section}
\theoremstyle{plain} 
\newtheorem{theorem}{Theorem}[section]
\newtheorem{lemma}[theorem]{Lemma}
\newtheorem{proposition}[theorem]{Proposition}
\begin{document}

\title[Galois groups of $x^{12}+ax^6+b$]{Elementary characterization of \\ the Galois groups of $x^{12}+ax^6+b$}

\author[Malcolm H. W. Chen]{Malcolm Hoong Wai Chen} \address{Department of Mathematics, University of Manchester, Oxford Road, Manchester M13 9PL, United Kingdom.} \email{malcolmhoongwai.chen@postgrad.manchester.ac.uk}

\subjclass{12F10, 11R09, 12D05, 12-08.}
\keywords{Galois groups; degree 12; dodecic polynomials; power compositional polynomials; complete classification; linear resolvent}

 \begin{abstract}
Let $f(x)=x^{12}+ax^6+b \in \Q[x]$ be an irreducible polynomial, $g_4(x)=x^4+ax^2+b$, $g_6(x)=x^6+ax^3+b$, and let $G_4$ and $G_6$ be the Galois group of $g_4(x)$ and $g_6(x)$, respectively. Building upon known characterizations of $G_4$ and $G_6$ in the literature, this paper provides an elementary characterization of all sixteen possible Galois groups of $f(x)$. In particular, we show that the Galois group of $f(x)$ can be uniquely determined by the pair $(G_4,G_6)$ along with testing whether at most two expressions involving $a$ and $b$ are rational squares.
 \end{abstract}

\maketitle

\section{Introduction}

Let $f(x)$ be an irreducible polynomial with rational coefficients. An interesting problem is to characterize the Galois group of $f(x)$ using its coefficients. A classical example is that if $f(x)=x^4+ax^2+b$, then its Galois group can be determined by testing whether $b$ and $b(a^2-4b)$ are rational squares \cite{kappewarren}.

In recent years much attention are devoted to \emph{power compositional polynomials}, that is, irreducible polynomials of the form $g(x^r)$ for some monic $g(x) \in \Q[x]$ and positive integer $r \ge 2$. If $g(x)=x^2+ax+b$, there are complete characterizations for the cases where $r=2$ and $r=3$ given by \cite{kappewarren} and \cite{awtreysextic}, respectively. There are also partial results when $r=4$ \cite{awtreyoctic,mypaper}, eventually leading to a complete characterization in \cite{awtreyoctic2}. Some other power compositional polynomials, namely $x^6+ax^4+bx^2+c$ \cite{awtreysextic2}, $x^8+ax^6+bx^4+ax^2+1$ \cite{awtreyoctic3}, and $x^9+ax^6+bx^3+c$ \cite{awtreynonic} have been completely characterized too. Each of these classification involves studying the subfield defined by $g(x)$ and its Galois group. 

In this paper, we will prove a complete characterization for the Galois group $G_{12}$ of $f(x)=x^{12}+ax^6+b$ (Theorem \ref{mthm}), motivated by the observation that $f(x)$ can be expressed as a power compositional polynomial in two different ways. In particular, we have $f(x)=g_4(x^3)=g_6(x^2)$, where $g_4(x)=x^4+ax^2+b$ and $g_6(x)=x^6+ax^3+b$, respectively. Although the techniques employed are similar to earlier works, the classification of this family requires detailed case analysis. To the best of our knowledge, this complete classification has not yet appeared in the literature in such a streamlined form.

Let $G_4$ and $G_6$ be the Galois group of $g_4(x)$ and $g_6(x)$, respectively. In Section \ref{prelim}, we will recall some classical results on Galois groups, linear resolvents and factorizations of power compositional polynomials. In Section \ref{G12}, we work towards a complete characterization for $G_{12}$. We first determine the possible candidates for the Galois group of $G_{12}$ and the extent to which $(G_4,G_6)$ uniquely identifies $G_{12}$ in Section \ref{G12a}. We then distinguish the remaining cases in Section \ref{G12b}, and finally summarize our findings in Section \ref{G12d}.

\par We will be using the following notations throughout the paper: 
\begin{itemize}
\item $A_n$ : alternating group on $n$ letters, 
\item $S_n$ : symmetric group on $n$ letters, 
\item $nTj$ : $j$-th conjugacy class among transitive subgroups of $S_n$ (see \cite{butlermckay}), 
\item $\Gal(f)$ : Galois group of the polynomial $f(x)$,
\item $\disc(f)$ : discriminant of the polynomial $f(x)$,
\item $K^2$ : set of squares in a field $K$, 
\item $K^3$ : set of cubes in a field $K$.
\end{itemize}

\section{Preliminaries} \label{prelim}

We begin by recalling some important results on linear resolvents.

\begin{proposition}[\cite{soicher}] \label{soicher} 
Let $f(x) \in \Q[x]$ be an irreducible polynomial of degree $n$ and $\alpha_1,\dots,\alpha_n$ be all the roots of $f(x)$. Let $F(x_1,\dots,x_n) \in \Q[x_1,\dots,x_n]$ and let
\begin{equation*}
H=\{\sigma \in S_n : F(x_{\sigma(1)},\dots,x_{\sigma(n)}) = F(x_1,\dots,x_n)\}.
\end{equation*}
Then the resolvent polynomial of $f(x)$ corresponding to $F$ is
\begin{equation*}
R(x):=\prod_{\sigma \in S_n//H} \left(x-F(\alpha_{\sigma(1)},\dots,\alpha_{\sigma(n)})\right) \in \Q[x]
\end{equation*}
where $S_n//H$ is a complete set of right coset representatives of $H$ in $S_n$. The irreducible factors of $R(x)$ that occur with multiplicity one correspond to the orbits of the action of $Gal(f)$ on the cosets in $S_n/H$, and the Galois group for any of these irreducible factors is the image of the permutation representation of this action on its corresponding coset.
\end{proposition}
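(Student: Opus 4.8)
The plan is to work inside the splitting field $L$ of $f(x)$ over $\Q$ and translate everything into the action of $G := \Gal(L/\Q) = \Gal(f)$, viewed as a transitive subgroup of $S_n$ via its faithful action on the roots $\alpha_1,\dots,\alpha_n$. For $\sigma \in S_n$ write $\theta_\sigma := F(\alpha_{\sigma(1)},\dots,\alpha_{\sigma(n)}) \in L$. The first point to nail down is that $\theta_\sigma$ depends only on the coset of $\sigma$ modulo $H$ (with the composition convention matching Soicher's): this is immediate from the definition of $H$ as the stabilizer of $F$ under the relevant permutation action on the variables. Thus $R(x) = \prod_{\sigma \in S_n//H}(x-\theta_\sigma)$ is well defined, and it has exactly $[S_n:H]$ roots counted with multiplicity.

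Next I would show $R(x) \in \Q[x]$. Each $g \in G$ extends the permutation it induces on $\{\alpha_i\}$ to a field automorphism of $L$, and a short computation gives $g(\theta_\sigma) = \theta_{g\cdot\sigma}$ for an appropriate coset representative $g\cdot\sigma$; hence $G$ permutes the multiset $\{\theta_\sigma\}$. Consequently every elementary symmetric function of the $\theta_\sigma$ is fixed by $G$, so it lies in $L^G = \Q$, and therefore $R(x) \in \Q[x]$.

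The heart of the argument is the orbit--factor correspondence. The coset space $S_n/H$ breaks into $G$-orbits $O_1,\dots,O_k$, and accordingly $R(x) = \prod_{j=1}^k R_j(x)$ with $R_j(x) = \prod_{\sigma \in O_j}(x-\theta_\sigma)$; the same invariance argument applied to a single orbit shows each $R_j(x) \in \Q[x]$. Since $G$ acts transitively on $O_j$ and coset $\mapsto \theta_\sigma$ is $G$-equivariant, $G$ acts transitively on the set of \emph{distinct} values among $\{\theta_\sigma : \sigma \in O_j\}$; that set is thus a single $G$-orbit in $L$ and hence is precisely the root set of a monic irreducible $p_j(x) \in \Q[x]$, while all fibres of coset $\mapsto \theta_\sigma$ over $O_j$ have a common size $m_j$, so $R_j(x) = p_j(x)^{m_j}$. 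In particular $R_j$ is itself irreducible exactly when the $\theta_\sigma$, $\sigma \in O_j$, are pairwise distinct. An irreducible factor of $R(x)$ therefore occurs with multiplicity one precisely when it equals such an $R_j$ with distinct roots and no value $\theta_\sigma$ from another orbit coincides with a root of $R_j$; matching these $R_j$ to the orbits $O_j$ yields the asserted bijection.

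Finally, for a multiplicity-one irreducible factor $R_j$, its splitting field over $\Q$ is the subfield $M_j \subseteq L$ generated by $\{\theta_\sigma : \sigma \in O_j\}$. Since $M_j/\Q$ is Galois, $\Gal(R_j) = \Gal(M_j/\Q)$; restriction gives a surjection $G = \Gal(L/\Q) \twoheadrightarrow \Gal(M_j/\Q)$, and because $\Gal(M_j/\Q)$ acts faithfully on the roots $\{\theta_\sigma\}$, the group $\Gal(R_j)$ is identified with the image of $G$ acting on $O_j$ — that is, with the image of the permutation representation of $G$ on the corresponding coset orbit, as claimed. I expect the main technical nuisance to be the bookkeeping of the left/right coset conventions and the direction of the $G$-action needed to make $g(\theta_\sigma)$ land in the same coset space used to define $R(x)$; the only genuinely non-formal step is separating ``power of an irreducible'' from ``irreducible,'' which is exactly where the multiplicity-one hypothesis is used.
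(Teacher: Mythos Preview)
The paper does not supply a proof of this proposition; it is quoted from Soicher's thesis as a background result, so there is nothing in the paper to compare your approach against. Your argument is the standard one and is correct: well-definedness of $\theta_\sigma$ on cosets, rationality of $R(x)$ via $G$-invariance of the elementary symmetric functions, the orbit decomposition $R_j = p_j^{m_j}$ with equal fibre sizes forced by transitivity and equivariance, and the identification of $\Gal(R_j)$ with the image of $G$ acting on $O_j$ all go through as you indicate. The only point requiring care, as you rightly flag, is the left/right coset convention needed to make $g(\theta_\sigma) = \theta_{g\cdot\sigma}$ land in the same coset space; once a convention is fixed the rest is formal.
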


The linear resolvent corresponding to $F=x_1+x_2$ can be computed using resultants in Mathematica \cite{wmath} as the polynomial $R(x)$ satisfying the relation 
\begin{equation}
R(x)^2 = \frac{\Res_y(f(y),f(x-y))}{2^n \cdot f(x/2)}. \label{resultant}
\end{equation}

Likewise, the linear resolvent corresponding to $F=x_1 x_2$ can also be determined similarly by the relation
\begin{equation}
R(x)^2 = \frac{\Res_y(f(y),y^n f(x/y))}{\Res_y(f(y),x-y^2)}. \label{resultant2}
\end{equation}

For both cases, the list of degrees of irreducible factors of the linear resolvent $R(x)$ is equivalent to the list of orbit lengths for the action of $\Gal(f)$ on all 2-sets of $n$ letters, which can be computed in GAP \cite{gap}.

We now state a result on the irreducibility of composition of polynomials, which we will later specialize with $g(x)=x^r$ to determine the factorization patterns of power compositional polynomials we are interested in. 

\begin{proposition}[{\cite[Section 2.1]{schinzel}}] \label{capelli}
Let $f(x),g(x) \in \Q[x]$ where $f(x)$ is irreducible, and let $\alpha$ be a root of $f(x)$. Then $f(g(x))$ is reducible over $\Q$ if and only if $g(x)-\alpha$ is reducible over $\Q(\alpha)$. Furthermore, if
\begin{eqnarray*}
g(x)-\alpha = c_1 u_1(x)^{e_1} \cdots u_k(x)^{e_k} \in \Q(\alpha)[x]
\end{eqnarray*}
where $u_1(x),\dots,u_k(x)$ are distinct monic polynomials irreducible over $\Q(\alpha)$, then
\begin{eqnarray*}
f(g(x)) = c_2 \N(u_1(x))^{e_1} \cdots \N(u_k(x))^{e_k} \in \Q[x]
\end{eqnarray*}
where the norms $\N(u_1(x)),\dots,\N(u_k(x))$ are distinct monic polynomials irreducible over $\Q$.
\end{proposition}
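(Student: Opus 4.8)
The plan is to express $f(g(x))$ as a norm from $\Q(\alpha)$ to $\Q$ and then read off the factorization from multiplicativity of the norm together with two degree computations. Write $n=\deg f$, let $\ell$ be the leading coefficient of $f$, and let $\sigma_1=\mathrm{id},\sigma_2,\dots,\sigma_n\colon\Q(\alpha)\hookrightarrow\overline{\Q}$ be the distinct embeddings, so that $\sigma_i(\alpha)=\alpha_i$ runs over the roots of $f$. For $u(x)\in\Q(\alpha)[x]$ set $\N(u)(x)=\prod_{i=1}^{n}\sigma_i(u)(x)$, with $\sigma_i$ acting on coefficients; a routine Galois-invariance check gives $\N(u)\in\Q[x]$, and $\N$ is multiplicative and preserves monicity. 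Since $\N(g(x)-\alpha)=\prod_{i=1}^{n}(g(x)-\alpha_i)=f(g(x))/\ell$, applying $\N$ to $g(x)-\alpha=c_1u_1^{e_1}\cdots u_k^{e_k}$ immediately yields $f(g(x))=c_2\,\N(u_1)^{e_1}\cdots\N(u_k)^{e_k}$ with $c_2=\ell\,\N(c_1)$. So it remains to prove that each $\N(u_i)$ is a monic irreducible of $\Q[x]$ and that the $\N(u_i)$ are pairwise distinct; the ``reducible over $\Q$'' equivalence then follows by comparing the case $k=1,\,e_1=1$ (where $f(g(x))$ is a scalar times the irreducible $\N(u_1)$) with any coarser factorization (where $f(g(x))$ splits off at least two non-constant factors, each $\N(u_i)$ having degree $n\deg u_i\ge 1$).

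For irreducibility, fix $i$ and pick a root $\beta\in\overline{\Q}$ of $u_i$. Because $u_i\mid g(x)-\alpha$ we have $g(\beta)=\alpha$, hence $\Q(\alpha)=\Q(g(\beta))\subseteq\Q(\beta)$, and the tower law gives
\[
[\Q(\beta):\Q]=[\Q(\beta):\Q(\alpha)]\cdot[\Q(\alpha):\Q]=(\deg u_i)\cdot n=\deg\N(u_i).
\]
Meanwhile $\N(u_i)(\beta)=\prod_{l}\sigma_l(u_i)(\beta)=0$ because the factor $\sigma_1(u_i)=u_i$ vanishes at $\beta$, so the minimal polynomial of $\beta$ over $\Q$ divides $\N(u_i)$; being monic of equal degree, it \emph{equals} $\N(u_i)$, which is therefore irreducible over $\Q$.

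For distinctness, suppose $\N(u_i)=\N(u_j)$ and choose roots $\beta_i,\beta_j$ of $u_i,u_j$. By the previous paragraph $\beta_i$ and $\beta_j$ share the minimal polynomial $\N(u_i)$ over $\Q$, so there is a field isomorphism $\tau\colon\Q(\beta_i)\to\Q(\beta_j)$ fixing $\Q$ with $\tau(\beta_i)=\beta_j$. Then $\tau(\alpha)=\tau(g(\beta_i))=g(\tau(\beta_i))=g(\beta_j)=\alpha$, so $\tau$ fixes $\Q(\alpha)$ pointwise and hence is a $\Q(\alpha)$-isomorphism carrying $\beta_i$ to $\beta_j$; comparing minimal polynomials of $\beta_i$ and $\beta_j$ over $\Q(\alpha)$ forces $u_i=u_j$, i.e.\ $i=j$.

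The delicate point --- and the only place where the hypotheses that $f$ is irreducible and $\alpha$ is a root of $f$ are really used --- is that $\N(u_i)$ is genuinely irreducible and not merely a proper power of an irreducible; for a general finite extension $L/\Q$ this can fail (for instance $\N_{\Q(\sqrt{2})/\Q}(x^2+1)=(x^2+1)^2$), and it is rescued here precisely by the inclusion $\Q(\alpha)\subseteq\Q(\beta)$, which pins the degree $\deg\N(u_i)=[\Q(\beta):\Q]$ exactly. Everything else is bookkeeping with the norm map, so I do not anticipate further obstacles.
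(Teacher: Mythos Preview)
Your proof is correct and is essentially the standard argument for Capelli's theorem. Note, however, that the paper does not supply its own proof of this proposition: it is stated with a citation to \cite[Section 2.1]{schinzel} and left unproved, so there is no in-paper argument to compare against. Your write-up would serve perfectly well as a self-contained justification; the key step you flag---that $\Q(\alpha)\subseteq\Q(\beta)$ forces $\deg\N(u_i)=[\Q(\beta):\Q]$ and hence irreducibility of $\N(u_i)$---is exactly the heart of the matter, and your distinctness argument via the induced $\Q(\alpha)$-isomorphism is clean.
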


We now recall some results on Galois groups. We identify Galois groups as transitive permutation groups up to conjugacy. Let $f(x)=g(x^r)$ be an irreducible power compositional polynomial, $K_f$ and $K_g$ be stem fields of $f(x)$ and $g(x)$, respectively, and let $H_f$ and $H_g$ be subgroups of $\Gal(f)$ corresponding to $K_f$ and $K_g$, respectively. Then $H_f \leq H_g$ is the point stabilizer of 1 under $\Gal(f)$ and the splitting field of $g(x)$ is the normal closure of $K_g$. The list of Galois groups of the normal closures of non-isomorphic intermediate subfields of $K_f$ is an invariant of $\Gal(f)$ commonly referred as the \emph{subfield content of $f(x)$}, which can be computed in GAP \cite{gap}. In particular, this list must contain $\Gal(g)$. We also recall that $\Gal(f) \leq A_n$ if and only if $\disc(f) \in \Q^2$.

Lastly, we recall known characterizations of $G_4$ and $G_6$ in the literature.

\begin{proposition}{\cite[Theorem 3]{kappewarren}} \label{g4}
Let $f(x)=x^4+ax^2+b \in \Q[x]$ be an irreducible polynomial. Then $\Gal(f)$ is
\begin{enumerate}
\item $4T1$ if $b(a^2-4b) \in \Q^2$,
\item $4T2$ if $b \in \Q^2$,
\item $4T3$ if $b \notin \Q^2$ and $b(a^2-4b) \notin \Q^2$.
\end{enumerate}
\end{proposition}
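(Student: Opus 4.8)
The plan is to work directly with a stem field of $f$. Fix a root $\alpha$ of $f(x)=x^4+ax^2+b$; since $f$ is a polynomial in $x^2$, its four roots are $\pm\alpha$ and $\pm\beta$, where $\alpha^2$ and $\beta^2$ are the two roots of $x^2+ax+b$. Irreducibility of $f$ forces $x^2+ax+b$ to be irreducible over $\Q$ (otherwise $f(x)=(x^2-\alpha^2)(x^2-\beta^2)$ would split off a rational quadratic factor), so $a^2-4b\notin\Q^2$ and $\Q(\sqrt{a^2-4b})=\Q(\alpha^2)\subseteq\Q(\alpha)$ is a genuine quadratic subfield. Moreover every element of $\Gal(f)$ preserves the partition of the roots into $\{\alpha,-\alpha\}$ and $\{\beta,-\beta\}$ (two roots lie in the same part exactly when they have equal squares), so $\Gal(f)$ embeds in $C_2\wr C_2\cong D_4$; among the transitive subgroups of $S_4$ only $4T1$ (the cyclic group $C_4$), $4T2$ (the Klein four-group $V_4$ in its regular representation), and $4T3$ (the dihedral group $D_4$) sit inside a copy of $D_4$, so these are the only candidates. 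Throughout I would lean on the identities $(\alpha\beta)^2=b$ and $(\alpha^2-\beta^2)^2=a^2-4b$.

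Next I would reduce the whole classification to a single question: is $\sqrt b\in\Q(\alpha)$? Since $\beta^2=b/\alpha^2\in\Q(\alpha)$, the splitting field $L$ of $f$ is $\Q(\alpha)$ when $\beta\in\Q(\alpha)$ and a quadratic extension of it otherwise; and $\beta=\pm\sqrt b/\alpha$ shows $\beta\in\Q(\alpha)$ if and only if $\sqrt b\in\Q(\alpha)$. Hence $[L:\Q]\in\{4,8\}$, with $[L:\Q]=8$ exactly when $\sqrt b\notin\Q(\alpha)$; in that case $|\Gal(f)|=8$, forcing $\Gal(f)=4T3$. When $[L:\Q]=4$ the group has order $4$ and equals $4T1$ or $4T2$, and the remaining work is to decide which.

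For the order-$4$ dichotomy I would first prove $\Gal(f)=4T2\iff b\in\Q^2$. If $\Gal(f)$ is the regular Klein four-group, each of its three nontrivial elements acts on $\{\pm\alpha,\pm\beta\}$ as a fixed-point-free double transposition, and a direct check shows all of them fix $\alpha\beta$; hence $\alpha\beta\in\Q$ and $b=(\alpha\beta)^2\in\Q^2$. Conversely, if $b\in\Q^2$ then $\alpha\beta\in\Q$, so $\beta\in\Q(\alpha)$, $L=\Q(\alpha)$ has degree $4$, and any $\sigma\in\Gal(f)$ with $\sigma(\alpha)=\beta$ satisfies $\sigma(\beta)=\alpha\beta/\sigma(\alpha)=\alpha$, so $\sigma^2=\mathrm{id}$; together with the involution $\tau\colon\alpha\mapsto-\alpha$ (which lies in $\Gal(f)$ because $f$ is even) this exhibits two distinct elements of order $2$ in a group of order $4$, which must therefore be $V_4=4T2$. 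This settles case (2). For case (1): if $b(a^2-4b)\in\Q^2$ then $\sqrt b\in\Q(\sqrt{a^2-4b})\subseteq\Q(\alpha)$, so $[L:\Q]=4$; and $b(a^2-4b)\in\Q^2$ together with $a^2-4b\notin\Q^2$ forces $b\notin\Q^2$, so by the dichotomy $\Gal(f)\neq4T2$, leaving $\Gal(f)=4T1$. For case (3): if $b\notin\Q^2$ and $b(a^2-4b)\notin\Q^2$, assume toward a contradiction that $\sqrt b\in\Q(\alpha)$; then $\Q(\sqrt b)$ and $\Q(\sqrt{a^2-4b})$ are quadratic subfields of the quartic field $\Q(\alpha)$, and they are distinct precisely because $b(a^2-4b)\notin\Q^2$. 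A number field of degree $4$ with two distinct quadratic subfields is the compositum of the two, hence Galois over $\Q$ with group $V_4$; so $\Q(\alpha)$ would be Galois with group $4T2$, whence $b\in\Q^2$, a contradiction. Therefore $\sqrt b\notin\Q(\alpha)$, $[L:\Q]=8$, and $\Gal(f)=4T3$.

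I expect the main obstacle to be the bookkeeping in the order-$4$ case: disentangling $C_4$ from $V_4$ rests on correctly tracking sign choices (fixing $\sqrt b:=\alpha\beta$, and noting that any automorphism swapping $\alpha^2$ and $\beta^2$ negates $\sqrt{a^2-4b}$) together with the small structural fact about quartic fields with two quadratic subfields; everything else is bounded-degree field theory. As a sanity check one can bypass the regular-representation computation: a resolvent cubic of $x^4+ax^2+b$ is $(x-a)(x^2-4b)$, whose splitting type over $\Q$ (three rational roots versus exactly one) already distinguishes $b\in\Q^2$ from $b\notin\Q^2$, i.e.\ $4T2$ from $\{4T1,4T3\}$; the classical refinement for the one-rational-root case then separates $4T1$ from $4T3$ by whether $f$ stays irreducible over $\Q\bigl(\sqrt{\disc f}\bigr)=\Q(\sqrt b)$, recovering the condition $b(a^2-4b)\in\Q^2$.
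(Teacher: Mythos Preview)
The paper does not prove this proposition at all: it is simply quoted from Kappe--Warren as background input, with no argument supplied. There is therefore nothing in the paper to compare your attempt against.

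That said, your proof is correct and self-contained. The reduction to $\Gal(f)\le C_2\wr C_2\cong D_4$ via the block system $\{\alpha,-\alpha\},\{\beta,-\beta\}$ is valid; the reformulation $[L:\Q]=4\iff\sqrt b\in\Q(\alpha)$ follows from $\alpha^2\beta^2=b$ as you say; the equivalence $\Gal(f)=4T2\iff b\in\Q^2$ is argued cleanly in both directions (in the converse, the element $\tau$ sending $\alpha\mapsto-\alpha$ is forced to send $\beta\mapsto-\beta$ because $\alpha\beta\in\Q$, so it really has order $2$); and the contradiction for case~(3) via two distinct quadratic subfields of a quartic field is sound, since $F_1F_2=\Q(\alpha)$ would then be Galois with group $V_4$, forcing the transitive action to be $4T2$ and hence $b\in\Q^2$. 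One small expository point: ``$\tau$ lies in $\Gal(f)$ because $f$ is even'' is really ``some automorphism sends $\alpha\mapsto-\alpha$ by transitivity, and in the $b\in\Q^2$ case it is uniquely determined''; the mathematics is fine. Your resolvent-cubic sanity check at the end is also correct and matches the usual textbook route.
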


\begin{proposition}{\cite[Theorem 1.1]{cavallo}} \label{g6}
Let $f(x)=x^6+ax^3+b \in \Q[x]$ be an irreducible polynomial, $d=3(4b-a^2)$ and $r(x)=x^3-3bx+ab$. Then $\Gal(f)$ is
\begin{enumerate}
\item $6T2$ if $d \in \Q^2$ and $r(x)$ is reducible.
\item $6T1$ if $d \in \Q^2$, $b \in \Q^3$ and $r(x)$ is irreducible.
\item $6T5$ if $d \in \Q^2$, $b \notin \Q^3$ and $r(x)$ is irreducible.
\item $6T3$ if $d \notin \Q^2$ and either $b \in \Q^3$ or $r(x)$ is reducible.
\item $6T9$ if $d \notin \Q^2$, $b \notin \Q^3$ and $r(x)$ is irreducible.
\end{enumerate}
\end{proposition}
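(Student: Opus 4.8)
The plan is to establish the classification directly from the structure of the splitting field, exploiting the power-compositional identity $f(x)=g(x^3)$ with $g(x)=x^2+ax+b$. First one checks that $g$ is irreducible: otherwise $f$ would factor as a product of two cubic binomials, contradicting the hypothesis. Hence $a^2-4b\notin\Q^2$ and $g$ defines a quadratic field $\Q(\theta)$ with $\theta^2+a\theta+b=0$; write $\theta'=b/\theta$ for the conjugate root. Applying Proposition~\ref{capelli} to the composition $g(x^3)$, irreducibility of $f$ is equivalent to $\theta\notin(\Q(\theta)^{*})^{3}$, and the splitting field of $f$ is
\begin{equation*}
L=\Q\!\left(\zeta_3,\ \sqrt[3]{\theta},\ \sqrt[3]{\theta'}\right),
\end{equation*}
where $\zeta_3$ is a primitive cube root of unity. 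Put $M=\Q(\zeta_3,\theta)=\Q(\zeta_3,\sqrt{a^2-4b})$, which is Galois over $\Q$ with $[M:\Q]$ coprime to $3$. By Kummer theory $L/M$ is abelian with $\Gal(L/M)$ elementary abelian of exponent $3$ and rank $\rho$ equal to the $\mathbb{F}_3$-dimension of the span of $\{\theta,\theta'\}$ in $M^{*}/(M^{*})^{3}$; a norm argument (using $3\nmid[M:\Q(\theta)]$) promotes $\theta\notin(\Q(\theta)^{*})^{3}$ to $\theta\notin(M^{*})^{3}$, so $\rho\in\{1,2\}$. Since $M/\Q$ is Galois, $\Gal(f)/\Gal(L/M)\cong\Gal(M/\Q)$, whence $|\Gal(f)|=[M:\Q]\cdot 3^{\rho}$.

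Next I would read off $[M:\Q]$ and $\rho$ from the stated invariants. Because $d=3(4b-a^2)=-3(a^2-4b)$, we have $d\in\Q^2$ exactly when $\Q(\sqrt{a^2-4b})=\Q(\sqrt{-3})=\Q(\zeta_3)$, i.e.\ $[M:\Q]=2$; otherwise $M$ is biquadratic with $\Gal(M/\Q)\cong C_2\times C_2$ and $[M:\Q]=4$. For $\rho$, the pivotal computation is that, with cube roots normalized so that $\sqrt[3]{\theta^2\theta'}\cdot\sqrt[3]{\theta\theta'^2}=\theta\theta'=b$, the element $\mu:=\sqrt[3]{\theta^2\theta'}+\sqrt[3]{\theta\theta'^2}$ satisfies $\mu^3-3b\mu+ab=0$, and in fact the three values of $\mu$ are precisely the roots of $r(x)$. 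Using $u^3=\theta^2\theta'=b\theta$ together with $uv=b$, one then shows that $r(x)$ has a rational root if and only if $b\theta\in(\Q(\theta)^{*})^{3}$, equivalently $b\theta\in(M^{*})^{3}$. Since $\theta'=b\theta^{-1}$ and $b\in(M^{*})^{3}\iff b\in\Q^3$ for rational $b$, this yields: $\rho=1$ if and only if $b\in\Q^3$ or $r(x)$ is reducible, and $\rho=2$ otherwise.

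Assembling these facts, $|\Gal(f)|$ equals $6,12,18,36$ according to the five cases, and it only remains to identify the transitive permutation group. When $d\notin\Q^2$, $\Gal(f)$ is an extension of $C_2\times C_2$ by an elementary abelian $3$-group of rank $\rho$ on which the cyclotomic quotient acts by inversion; for $\rho=1$ the $3$-Sylow is normal and the quotient is noncyclic, which excludes $A_4$ and the dicyclic group and leaves $6T3$, and for $\rho=2$ one gets $6T9$ (confirmed by its subfield content). When $d\in\Q^2$ and $\rho=2$ the order is $18$, forcing $6T5$. The delicate case is $d\in\Q^2$ with $\rho=1$, where $|\Gal(f)|=6$: here $L=\Q(\zeta_3,\sqrt[3]{\theta})=\Q(\sqrt[3]{\theta})$ is the stem field itself, so $\Gal(f)$ acts regularly and is $6T1$ or $6T2$ according as $L/\Q$ is abelian or not. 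The Kummer criterion for abelianness shows $L/\Q$ is abelian if and only if $\theta\theta'=b\in(\Q(\zeta_3)^{*})^{3}$, i.e.\ $b\in\Q^3$, and in this sub-case $r(x)$ reducible forces $b\notin\Q^3$ (else $\theta=(b\theta)/b$ would be a cube in $\Q(\zeta_3)=\Q(\theta)$, contradicting irreducibility of $f$); thus $r(x)$ reducible yields $6T2$ and ($r(x)$ irreducible with $b\in\Q^3$) yields $6T1$. I expect the main obstacle to be exactly this last step---matching each abstract extension to its $nTj$ label and pinning down the $C_6$ versus $S_3$ dichotomy---together with the resolvent identity for $r(x)$ and its translation into a cube condition in $\Q(\theta)$; throughout, the group labels and subfield content can be cross-checked in GAP as in Section~\ref{prelim}.
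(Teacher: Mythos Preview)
The paper does not prove this proposition at all: it is quoted as \cite[Theorem~1.1]{cavallo} and used as a black box throughout Section~\ref{G12}. There is therefore no ``paper's own proof'' to compare against.

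That said, your argument is correct and self-contained, and it is worth recording how it differs from merely citing the result. You build the splitting field as the Kummer extension $L=M(\sqrt[3]{\theta},\sqrt[3]{\theta'})$ over $M=\Q(\zeta_3,\theta)$, read off $[M:\Q]\in\{2,4\}$ from whether $d=-3(a^2-4b)$ is a square, and reduce the $3$-rank $\rho\in\{1,2\}$ to a cube condition in $\Q(\theta)^*/(\Q(\theta)^*)^3$ via the norm trick $3\nmid[M:\Q(\theta)]$. The identification of $r(x)=x^3-3bx+ab$ as the minimal polynomial of $\mu=\sqrt[3]{b\theta}+\sqrt[3]{b\theta'}$, together with the observation that a rational root of $r$ forces $c^3=b\theta$ with $c\in\Q(\theta)$ (since $c$ lies in the quadratic field generated by $c^3=b\theta$), cleanly gives ``$r$ reducible $\iff b\theta\in(\Q(\theta)^*)^3$''. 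Combined with $\theta'\equiv\theta^{-2}b\pmod{\text{cubes}}$, this yields $\rho=1\iff b\in\Q^3$ or $r$ reducible, exactly the dichotomy in the statement. The final pinning of $6T1$ versus $6T2$ via the Kummer abelianness criterion $\theta'\equiv\theta^{-1}\pmod{(M^*)^3}\iff b\in\Q^3$, and the incompatibility of ``$r$ reducible'' with ``$b\in\Q^3$'' under the standing irreducibility of $f$, are both right.

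One cosmetic point: when you exclude $6T4\cong A_4$ in the $|G|=12$ case you invoke ``the $3$-Sylow is normal''; it is cleaner to say that your construction gives a surjection $\Gal(f)\twoheadrightarrow\Gal(M/\Q)\cong C_2\times C_2$, which $A_4$ does not admit. Similarly, for $|G|=36$ you can distinguish $6T9$ from $6T10$ by noting that $\Gal(L/M)\cong C_3\times C_3$ is normal in $\Gal(f)$ with quotient $C_2\times C_2$, a structure $6T10$ lacks; appealing to subfield content in GAP is fine but not needed.
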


\section{Galois groups of $x^{12}+ax^6+b$} \label{G12}

\subsection{Possible Galois groups} \label{G12a}

We now establish some preliminary results required to completely classify $G_{12}$. 

\begin{lemma} \label{lem1}
Let $f(x)=x^{12}+ax^6+b \in \Q[x]$ be an irreducible polynomial. Then 
\begin{enumerate}
\item $\disc(f) \in \Q^2$ if and only if $G_4$ is $4T2$.
\item $|\Gal(f)| \leq \min\{18|G_4|,4|G_6|\}$.
\end{enumerate}
\end{lemma}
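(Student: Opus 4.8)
For part (1), the plan is to compute the discriminant of $f(x)=x^{12}+ax^6+b$ explicitly as a power of the discriminants/resultants attached to the inner polynomials. Since $f(x)=g_4(x^3)$, there is a standard formula expressing $\disc(f)$ in terms of $\disc(g_4)$, the leading coefficients, and $\Res(g_4(x^3), 3x^2)$ type factors; more directly, $\disc(x^{12}+ax^6+b)$ can be written (up to a rational square) in terms of $b$ and $a^2-4b$. The computation should yield $\disc(f) \equiv c\cdot b^{s} (a^2-4b)^{t} \pmod{\Q^2}$ for explicit exponents and constant $c$; one then reads off that this lies in $\Q^2$ precisely when $b \in \Q^2$ (the parities of $s,t$ and the sign of $c$ being arranged so that the $b(a^2-4b)\in\Q^2$ and the generic cases fail). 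Comparing with Proposition \ref{g4}, $b \in \Q^2$ is exactly the condition for $G_4 = 4T2$, giving the claim. Alternatively, and perhaps more cleanly, one can argue group-theoretically: $\Gal(f) \leq A_{12}$ iff $\disc(f)\in\Q^2$, and since $\disc(x^{12}+ax^6+b)$ and $\disc(x^4+ax^2+b)$ differ by a square (both reduce to the same squarefree kernel in $b$ and $a^2-4b$, as the $x\mapsto x^3$ substitution contributes only square factors to the discriminant), we get $\disc(f)\in\Q^2 \iff \disc(g_4)\in\Q^2 \iff \Gal(g_4)\leq A_4 \iff G_4 = 4T2$, the last equivalence because among $4T1, 4T2, 4T3$ only $4T2 = V_4$ sits inside $A_4$.

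For part (2), I would use the two power-compositional structures $f(x)=g_6(x^2)$ and $f(x)=g_4(x^3)$ together with Proposition \ref{capelli}. Write $K_f$ for a stem field of $f$. Since $f(x)=g_6(x^2)$, if $\beta$ is a root of $f$ then $\beta^2$ is a root of $g_6$, so $\Q(\beta^2)$ is (isomorphic to) a stem field of $g_6$, and $[\Q(\beta):\Q(\beta^2)] \leq 2$; hence $[K_f:\Q] \leq 2[K_{g_6}:\Q] = 2\cdot 6 = 12$... but this only bounds the degree of the stem field, not the order of the Galois group. The correct approach is via normal closures: the splitting field $L_f$ of $f$ contains the splitting field $L_6$ of $g_6$ (as $L_6 = L_f$ intersected with... more precisely $L_6 \subseteq L_f$ since every root of $g_6$ is a square of a root of $f$). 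Moreover $L_f$ is generated over $L_6$ by square roots of the roots of $g_6$; since the roots of $f$ come in pairs $\pm\beta$, adjoining one square root of each of the $6$ roots of $g_6$ and the cube roots of unity... Actually, $L_f = L_6(\beta_1,\dots,\beta_6)$ where $\beta_i^2$ are the roots of $g_6$, and $[L_f:L_6]$ divides $2^{?}$; the cleanest bound: $\Gal(L_f/L_6)$ embeds into a group of order dividing $4$ using the relation $\beta_i\beta_j = \pm(\text{root of } g_6\text{-related quantity})$ coming from $b = \prod$ of roots — I would make this precise by noting $\beta_1\cdots\beta_{12} = b$ and the roots of $f$ pair up as $\pm\beta$, and that $\Q(\beta_1)$ already contains enough square roots. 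The upshot I expect: $[L_f : L_6] \mid 4$, giving $|\Gal(f)| \leq 4|G_6|$. Symmetrically, $f(x)=g_4(x^3)$ gives $L_4 \subseteq L_f$ with $L_f = L_4(\gamma_1,\dots)$ where cube roots are adjoined; the roots of $f$ are $\zeta_3^k \gamma_i$ and $L_f = L_4(\zeta_3, \gamma_1, \gamma_2, \gamma_3, \gamma_4)$ with the product relation again cutting down the exponent, yielding $[L_f:L_4] \mid 18 = 2\cdot 3^2$ (the $2$ for $\zeta_3$, and $3^2$ because once three cube roots are adjoined the fourth is determined by $\prod\gamma_i^3 = \pm b$ or similar). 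Hence $|\Gal(f)| \leq 18|G_4|$, and combining gives the stated minimum.

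The main obstacle will be getting the exponents in part (2) exactly right — specifically justifying the $18$ and the $4$ rather than the naive $2^6$ and $3^4$. The key leverage is Capelli's Proposition \ref{capelli}: over the stem field $K_{g_4}=\Q(\alpha)$, the polynomial $x^3 - \alpha$ either stays irreducible or splits, and either way adjoining $\alpha^{1/3}$ to the splitting field of $g_4$ (which already may or may not contain $\zeta_3$) is a Kummer-type extension; doing this for all four roots $\alpha_1,\dots,\alpha_4$ of $g_4$ gives a subgroup of $(\Z/3)^4 \rtimes (\text{something})$, but the relation $\alpha_1\alpha_2\alpha_3\alpha_4 = b$ forces $(\alpha_1\alpha_2\alpha_3\alpha_4)^{1/3}$ to be determined up to $\zeta_3$ by $b^{1/3}$, so the cube-root part has exponent at most $3^3$, and combined with a careful count one lands at $3^2$ after accounting for which roots' cube roots are forced by others — I will need to track this through the two-variable relations $g_4 = x^4+ax^2+b$ imposes (namely $\alpha_1\alpha_2 = \alpha_3\alpha_4$ possibilities). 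I would present this as: $L_f$ is generated over $L_4$ by $\zeta_3$ and the cube roots of the roots of $g_4$, and the structure of $g_4$ (its roots being $\pm\sqrt{r_1}, \pm\sqrt{r_2}$ with $r_1 r_2 = b$) reduces the number of independent cube roots to two beyond $\zeta_3$, hence $[L_f:L_4]\mid 2\cdot 3^2 = 18$. An analogous, simpler bookkeeping handles the $g_6(x^2)$ side for the bound $4|G_6|$.
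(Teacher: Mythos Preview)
Your plan for part (1) is correct and matches the paper: a direct computation gives
\[
\disc(f)=2^{12}3^{12}b^{5}(a^{2}-4b)^{6}=b\bigl(2^{6}3^{6}b^{2}(a^{2}-4b)^{3}\bigr)^{2},
\]
so $\disc(f)\in\Q^{2}$ iff $b\in\Q^{2}$, which by Proposition~\ref{g4} is exactly $G_{4}=4T2$. Your alternative group-theoretic version (comparing $\disc(f)$ and $\disc(g_{4})$ modulo squares) would also work but is unnecessary once the explicit formula is in hand.

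For part (2) your overall strategy---bound $[L_{f}:L_{4}]$ and $[L_{f}:L_{6}]$ via towers---is exactly what the paper does, but you are making it harder than it needs to be. The paper does not start from the naive bounds $3^{4}$ and $2^{6}$ and whittle them down by product relations; instead it writes the roots of $g_{4}$ and $g_{6}$ explicitly. For $g_{4}$ the roots are $\pm\alpha,\pm\beta$, so $\sqrt[3]{-\alpha}=-\sqrt[3]{\alpha}$ and the splitting field of $f$ is simply $L_{4}(\omega,\sqrt[3]{\alpha},\sqrt[3]{\beta})$, giving $[L_{f}:L_{4}]\leq 2\cdot3\cdot3=18$ in one line. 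You do reach this at the end of your discussion, so that half is fine.

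The $4|G_{6}|$ bound is where your proposal has a gap. Your suggested route through the product relation $\beta_{1}\cdots\beta_{12}=b$ gives at most one relation among six square roots, nowhere near enough to drop from $2^{6}$ to $2^{2}$. The paper's key observation, which you are missing, is that the roots of $g_{6}$ are $\gamma,\gamma\omega,\gamma\omega^{2},\delta,\delta\omega,\delta\omega^{2}$, and that $\sqrt{\omega}=(1+\sqrt{-3})/2=\omega+1$ already lies in $\Q(\omega)\subseteq L_{6}$. Hence $\sqrt{\gamma\omega^{j}}=\sqrt{\gamma}\,(\omega+1)^{j}$, and the splitting field of $f$ is $L_{6}(\sqrt{\gamma},\sqrt{\delta})$, giving $[L_{f}:L_{6}]\leq 2\cdot2=4$ immediately. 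Once you use this explicit root structure, both bounds fall out without any bookkeeping of product relations or appeals to Proposition~\ref{capelli}.
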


\begin{proof}
\textbf{(1)} Note that
\begin{eqnarray*}
\disc(f)=2^{12}3^{12}b^5(a^2-4b)^6 = b \big(2^6 3^6 b^2 (a^2-4b)^3\big)^2,
\end{eqnarray*}
so $\disc(f) \in \Q^2$ if and only if $b \in \Q^2$ if and only if $G_4$ is $4T2$ by Proposition \ref{g4}.
\par\textbf{(2)} Let $\pm\alpha,\pm\beta$ be the roots of $x^4+ax^2+b$ and $\omega$ be a primitive third root of unity. Then $\pm\sqrt[3]{\alpha},\pm\sqrt[3]{\alpha}\omega,\pm\sqrt[3]{\alpha}\omega^2,\pm\sqrt[3]{\beta},\pm\sqrt[3]{\beta}\omega,\pm\sqrt[3]{\beta}\omega^2$ are the roots of $f(x)$. Set $K=\Q(\alpha,\beta)$ and it follows that
\begin{equation*}
\begin{split}
|\Gal(f)| &= [\Q(\sqrt[3]{\alpha},\sqrt[3]{\beta},\omega):\Q] \\
&= [K(\sqrt[3]{\alpha},\sqrt[3]{\beta},\omega):K(\sqrt[3]{\beta},\omega)][K(\sqrt[3]{\beta},\omega):K(\omega)][K(\omega):K][K:\Q] \\
&\leq (3)(3)(2)|G_4| \\
&= 18|G_4|.
\end{split}
\end{equation*}
Now let $\gamma,\gamma\omega,\gamma\omega^2,\delta,\delta\omega,\delta\omega^2$ be the roots of $x^6+ax^3+b$, where $\omega=(-1+\sqrt{-3})/2$ is a primitive third root of unity. Then $\pm\sqrt{\gamma},\pm\sqrt{\gamma}\sqrt{\omega},\pm\sqrt{\gamma} \omega,\pm\sqrt{\delta},\pm\sqrt{\delta}\sqrt{\omega},\pm\sqrt{\delta}\omega$ are the roots of $f(x)$. Note that $\sqrt{\omega}=(1+\sqrt{-3})/2 = \omega + 1 \in \Q(\omega)$. Set $K=\Q(\gamma,\delta,\omega)$ and it follows that
\begin{equation*}
\begin{split}
|\Gal(f)| &= [\Q(\sqrt{\gamma},\sqrt{\delta},\sqrt{\omega}):\Q] \\
&= [K(\sqrt{\gamma},\sqrt{\delta},\sqrt{\omega}):K(\sqrt{\delta},\sqrt{\omega})][K(\sqrt{\delta},\sqrt{\omega}):K(\sqrt{\omega})][K(\sqrt{\omega}):K][K:\Q] \\
&\leq (2)(2)(1)|G_6| \\
&= 4|G_6|.
\end{split}
\end{equation*}
The result follows from combining both inequalities.
\end{proof}

\begin{lemma} \label{squarelem}
Let $f(x)=x^4+ax^2+b \in \Q[x]$. Then for every $r \in \Q \setminus \Q^2$, we have $r \in \Q(\theta)^2$ if and only if $r(a^2-4b)$, $r(-a+2\sqrt{b})$ or $r(-a-2\sqrt{b})$ is in $\Q^2$.
\end{lemma}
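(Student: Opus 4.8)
The plan is to recast the statement as a question about which quadratic fields embed into $\Q(\theta)$, where $\theta$ denotes a root of $f$; since $f$ is irreducible, $[\Q(\theta):\Q]=4$. For $r\in\Q\setminus\Q^2$ the hypothesis makes $\Q(\sqrt r)$ an honest quadratic field, and $r\in\Q(\theta)^2$ holds exactly when $\sqrt r\in\Q(\theta)$, i.e.\ when $\Q(\sqrt r)$ is one of the quadratic subfields of $\Q(\theta)$. Because $\Q(\sqrt r)=\Q(\sqrt s)\iff rs\in\Q^2$ for $r,s\in\Q\setminus\Q^2$, the lemma amounts to the assertion that the quadratic subfields of $\Q(\theta)$ are exactly those among $\Q(\sqrt{a^2-4b})$, $\Q(\sqrt{-a+2\sqrt b})$, $\Q(\sqrt{-a-2\sqrt b})$ whose radicand genuinely lies in $\Q\setminus\Q^2$. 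One such subfield is always present: since $\theta^2$ is a root of $y^2+ay+b$ we have $2\theta^2=-a\pm\sqrt{a^2-4b}$, so $2\theta^2+a\in\Q(\theta)$ is a square root of $a^2-4b$, and $a^2-4b\notin\Q^2$ (otherwise $f$ would split into two rational quadratics); hence $E:=\Q(\sqrt{a^2-4b})$ is a quadratic subfield of $\Q(\theta)$.

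I would then dispose of the case $G_4\in\{4T1,4T3\}$. Here $b\notin\Q^2$ by Proposition~\ref{g4}, so $\sqrt b\notin\Q$, neither $-a+2\sqrt b$ nor $-a-2\sqrt b$ is rational, and the last two candidate conditions are automatically false; thus I only need that $E$ is the \emph{unique} quadratic subfield of $\Q(\theta)$. If $G_4=4T1$ then $\Q(\theta)$ is Galois with group $C_4$, which has a single subgroup of index two. If $G_4=4T3$ then the stabiliser of $\theta$ is a non-normal subgroup of order two in $D_4$, and a direct inspection of the subgroup lattice of $D_4$ shows that such a subgroup is contained in exactly one subgroup of order four. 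Either way $\Q(\theta)$ has exactly one quadratic subfield, which must be $E$, and the claimed equivalence collapses to $r\in\Q(\theta)^2\iff r(a^2-4b)\in\Q^2$.

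Finally I would treat $G_4=4T2$, i.e.\ $b\in\Q^2$, in which case $\Q(\theta)/\Q$ is Galois with group $V_4$ by Proposition~\ref{g4} and so has exactly three quadratic subfields. Writing the roots of $f$ as $\pm\alpha,\pm\beta$ with $\alpha=\theta$ and using $(\alpha\beta)^2=b\in\Q^2$, I may replace $\beta$ by $-\beta$ to arrange $\alpha\beta=\sqrt b\in\Q$, so $\beta=\sqrt b/\alpha\in\Q(\theta)$; then from $\alpha^2+\beta^2=-a$ and $\alpha\beta=\sqrt b$ one gets $(\alpha+\beta)^2=-a+2\sqrt b$ and $(\alpha-\beta)^2=-a-2\sqrt b$, exhibiting $\Q(\sqrt{-a+2\sqrt b})$ and $\Q(\sqrt{-a-2\sqrt b})$ as quadratic subfields alongside $E$. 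Each of $-a\pm2\sqrt b$ lies in $\Q$ (as $\sqrt b\in\Q$) but not in $\Q^2$, since otherwise $\alpha$ would satisfy a rational quadratic, contradicting $[\Q(\theta):\Q]=4$; and the three fields are pairwise distinct, as any coincidence among them would force $\alpha$ into a quadratic subfield. These three distinct quadratic subfields are therefore all of them, and the lemma follows by testing $\Q(\sqrt r)$ against each via $\Q(\sqrt r)=\Q(\sqrt s)\iff rs\in\Q^2$. The step that needs the most care is precisely this $V_4$ case — verifying that the three exhibited fields are genuinely quadratic and genuinely distinct — and it is exactly there that the irreducibility of $f$ enters; everything else is routine manipulation in $\Q^\times/(\Q^\times)^2$ together with the elementary facts about $C_4$ and $D_4$ used above.
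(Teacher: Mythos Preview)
Your proof is correct and takes a genuinely different route from the paper's. The paper argues computationally: it writes $\sqrt r=a_3\theta^3+a_2\theta^2+a_1\theta+a_0$, applies the automorphism $\theta\mapsto-\theta$ to force either $a_0=a_2=0$ or $a_1=a_3=0$, and then in each case expands $(\,\cdot\,)^2$ and reads off the conditions on $r$ directly from the vanishing of the $\theta^2$–coefficient. Your argument is structural: you translate the question into ``which quadratic fields sit inside $\Q(\theta)$?'', produce $\Q(\sqrt{a^2-4b})$ explicitly, and then count quadratic subfields case by case on $G_4$ using the subgroup lattices of $C_4$, $D_4$, and $V_4$.

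Each approach has its merits. The paper's computation is self-contained --- it does not invoke Proposition~\ref{g4} or any prior knowledge of $G_4$ --- and it is exactly this shape of argument that the paper reuses in Proposition~\ref{prop2}, where one does \emph{not} yet know the Galois group of the degree-$12$ polynomial and so cannot split into cases on it. Your approach, by contrast, explains \emph{why} the three quantities $a^2-4b$, $-a+2\sqrt b$, $-a-2\sqrt b$ are the right ones (they generate precisely the quadratic subfields in the $V_4$ case, and the latter two become irrelevant when $b\notin\Q^2$), and it makes transparent the role of irreducibility. One small point: in the $V_4$ case your justification that the three quadratic subfields are pairwise distinct (``any coincidence would force $\alpha$ into a quadratic subfield'') is a little compressed; the cleanest way to see it is to note that $(-a+2\sqrt b)(-a-2\sqrt b)=a^2-4b$, so a coincidence between any two of the three fields forces the third radicand to be a rational square, which you have already excluded.
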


\begin{proof}
This follows from a slight modification of \cite[Proposition 3.7]{mypaper}, we reproduce the main arguments as follows. Set $\sqrt{r}=a_3 \theta^3 + a_2 \theta^2 + a_1 \theta  + a_0$. We claim that either $a_1=a_3=0$ or $a_0=a_2=0$. Suppose to the contrary that this is not the case. Since $\Gal(f)$ acts transitively on the roots of $f(x)$, there is a mapping $\tau \in \Gal(f)$ satisfying $\tau(\theta)=i\theta$. It follows that
\begin{eqnarray*}
\tau(\sqrt{r}) = a_2 \theta^2 + a_0 - (a_3 \theta^3 + a_1 \theta),
\end{eqnarray*}
and so $\tau(\sqrt{r}) \neq \sqrt{r}$. Similarly, $\tau(\sqrt{r})$ is another root of the minimal polynomial of $\sqrt{r}$ over $\Q$. Since $r \in \Q$, the minimal polynomial is $x^2-r$ and hence, $\tau(\sqrt{r})=-\sqrt{r}$. It follows that $a_0=a_2=0$, a contradiction.
\par\textbf{Case 1:} $a_1=a_3=0$. Then 
\begin{eqnarray*}
r=(a_2 \theta^2 + a_0)^2 = (2a_0 a_2 - a a_2^2)\theta^2 + (a_0^2-ba_2^2).
\end{eqnarray*}
The coefficient of $\theta^2$ is zero, so $a_2 \neq 0$ and $a_0 = {aa_2}/2$. It follows that $r=(a^2-4b)(a_2/2)^2$ and hence, $r(a^2-4b) \in \Q^2$.
\par\textbf{Case 2:} $a_0=a_2=0$. Then
\begin{eqnarray*}
r=(a_3 \theta^3 + a_1 \theta)^2 = (a_1^2 - 2aa_1 a_3 + a^2 a_3^2 - ba_3^2)\theta^2 + (aba_3^2-2ba_1a_3).
\end{eqnarray*}
The coefficient of $\theta^2$ is zero, so $a_3 \neq 0$ and $a_1=a_3(a\pm\sqrt{b})$. It follows that $r=(-a \mp 2\sqrt{b})(a_3\sqrt{b})^2$ and hence, either $r(-a+2\sqrt{b}) \in \Q^2$ or $r(-a-2\sqrt{b}) \in \Q^2$.
\par This proves the necessity. The sufficiency follows from construction.
\end{proof}

\begin{proposition} \label{prop2}
Let $f(x)=x^{12}+ax^6+b \in \Q[x]$ be an irreducible polynomial and $\theta$ be a root of $f(x)$. Then for every $r \in \Q \setminus \Q^2$, we have $r \in \Q(\theta)^2$ if and only if $r(a^2-4b)$, $r(-a+2\sqrt{b})$ or $r(-a-2\sqrt{b})$ is in $\Q^2$.
\end{proposition}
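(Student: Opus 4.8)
The plan is to reduce Proposition \ref{prop2} to Lemma \ref{squarelem} by passing through the intermediate field defined by $g_4$. Let $\theta$ be a root of $f(x)=x^{12}+ax^6+b$ and set $\eta=\theta^3$, so that $\eta$ is a root of $g_4(x)=x^4+ax^2+b$ and $\Q(\eta)\subseteq\Q(\theta)$. Since $[\Q(\theta):\Q(\eta)]\le 3$ is odd while $[\Q(\theta):\Q(\eta)]=[\Q(\theta):\Q]/[\Q(\eta):\Q]$ and $12/[\Q(\eta):\Q]$ is an integer, irreducibility of $f$ forces $g_4$ to be irreducible of degree $4$ and $[\Q(\theta):\Q(\eta)]=3$. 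The key observation is that for $r\in\Q$, whether $r$ is a square in $\Q(\theta)$ is controlled entirely by $\Q(\eta)$: that is, $r\in\Q(\theta)^2$ if and only if $r\in\Q(\eta)^2$. Granting this, Lemma \ref{squarelem} applied to $g_4$ (with its root $\eta$) gives exactly the stated criterion, since $g_4$ has the same coefficients $a,b$.

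The heart of the argument is therefore the descent step: if $r\in\Q$ and $\sqrt{r}\in\Q(\theta)$, then $\sqrt{r}\in\Q(\eta)$. I would argue this at the level of fields by a degree/parity count. If $r\in\Q^2$ there is nothing to prove, so assume $r\notin\Q^2$; then $\Q(\sqrt r)$ is a quadratic extension of $\Q$ contained in $\Q(\theta)$. I want to show $\Q(\sqrt r)\subseteq\Q(\eta)$. Consider the compositum $\Q(\eta,\sqrt r)$ inside $\Q(\theta)$: it contains $\Q(\eta)$, and $[\Q(\eta,\sqrt r):\Q(\eta)]$ is $1$ or $2$. On the other hand $\Q(\eta,\sqrt r)\subseteq\Q(\theta)$ and $[\Q(\theta):\Q(\eta)]=3$, so $[\Q(\eta,\sqrt r):\Q(\eta)]$ divides $3$; hence it equals $1$, i.e. $\sqrt r\in\Q(\eta)$, as required. (Equivalently: $\sqrt r$ is fixed by the index-$3$ point stabilizer corresponding to $\Q(\theta)$ inside $\Q(\eta)$'s stabilizer, but a subgroup of index $3$ has no quotient of order $2$.) Conversely, $\Q(\eta)\subseteq\Q(\theta)$ gives $\Q(\eta)^2\cap\Q\subseteq\Q(\theta)^2$ trivially, so the two conditions $r\in\Q(\theta)^2$ and $r\in\Q(\eta)^2$ are equivalent.

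With the descent in hand, the proof concludes in one line: by the equivalence just established and Lemma \ref{squarelem} applied to the irreducible polynomial $g_4(x)=x^4+ax^2+b$ with root $\eta$, we get $r\in\Q(\theta)^2\iff r\in\Q(\eta)^2\iff r(a^2-4b)$, $r(-a+2\sqrt b)$ or $r(-a-2\sqrt b)$ lies in $\Q^2$. The only point needing a little care is the justification that $g_4$ is irreducible and $[\Q(\theta):\Q(\eta)]=3$; this is immediate from $f=g_4(x^3)$ being irreducible of degree $12$ together with $\deg g_4\mid 4$ and a parity/divisibility argument (or one can invoke Proposition \ref{capelli}), and it is the one place where the hypothesis that $f$ itself is irreducible is used. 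I expect this degree bookkeeping to be the main (very minor) obstacle; everything else is formal.
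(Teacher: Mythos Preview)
Your proposal is correct, and it takes a genuinely different (and cleaner) route than the paper. Both arguments aim at the same intermediate claim---that any $\sqrt{r}\in\Q(\theta)$ with $r\in\Q$ already lies in $\Q(\eta)=\Q(\theta^3)$---and then both finish by invoking Lemma~\ref{squarelem} for $g_4(x)=x^4+ax^2+b$ with root $\eta$. The difference is in how that descent is obtained. The paper writes $\sqrt{r}$ in the basis $1,\theta,\dots,\theta^{11}$, applies the Galois automorphisms $\sigma_1,\sigma_2$ sending $\theta\mapsto\theta\omega,\theta\omega^2$ (which exist by transitivity), and sums the three images to kill all coefficients except those of $\theta^0,\theta^3,\theta^6,\theta^9$, forcing $\sqrt{r}\in\Q(\theta^3)$. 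You instead observe that $[\Q(\theta):\Q(\eta)]=3$ is odd, so no intermediate field quadratic over $\Q(\eta)$ can sit inside $\Q(\theta)$, and hence $\sqrt{r}\in\Q(\eta)$ automatically. Your degree/parity argument is more conceptual and avoids any coordinate manipulation; the paper's argument is more explicit and closer in spirit to the proof of Lemma~\ref{squarelem} itself. One cosmetic point: your sentence justifying irreducibility of $g_4$ and $[\Q(\theta):\Q(\eta)]=3$ is a bit convoluted; it suffices to note that a factorization of $g_4$ would immediately factor $f=g_4(x^3)$, so $g_4$ is irreducible of degree~$4$ and the tower law gives $[\Q(\theta):\Q(\eta)]=12/4=3$.
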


\begin{proof}
Suppose that $r \in \Q(\theta)^2$. Then
\begin{equation}
\begin{split}
\sqrt{r} &= a_{11} \theta^{11} + a_{10} \theta^{10} + a_9 \theta^9 + a_8 \theta^8 + a_7 \theta^7 + a_6 \theta^6 \\
&+ a_5 \theta^5 + a_4 \theta^4 + a_3 \theta^3 + a_2 \theta^2 + a_1 \theta + a_0 \label{Eq1}
\end{split}
\end{equation}
for some $a_0,a_1,\dots,a_{11} \in \Q$. Since $f(x)$ is irreducible, $\Gal(f)$ acts transitively on the roots of $f(x)$. In particular, there are mappings $\sigma_1,\sigma_2 \in \Gal(f)$ satisfying $\sigma_1(\theta)=\theta\omega$ and $\sigma_2(\theta)=\theta\omega^2$ where $\omega$ is a primitive third root of unity. Then
\begin{equation}
\begin{split}
\sigma_1(\sqrt{r}) &= (a_{11} \theta^{11} + a_8 \theta^8 + a_5 \theta^5 + a_2 \theta^2) \omega^2 + (a_{10} \theta^{10} + a_7 \theta^7 + a_4 \theta^4 + a_1 \theta) \omega \\ &+ (a_9 \theta^9 + a_6 \theta^6 + a_3 \theta^3 + a_0), \label{Eq2}
\end{split}
\end{equation}
\begin{equation}
\begin{split}
\sigma_2(\sqrt{r}) &= (a_{11} \theta^{11} + a_8 \theta^8 + a_5 \theta^5 + a_2 \theta^2) \omega + (a_{10} \theta^{10} + a_7 \theta^7 + a_4 \theta^4 + a_1 \theta) \omega^2 \\ &+ (a_9 \theta^9 + a_6 \theta^6 + a_3 \theta^3 + a_0). \label{Eq3}
\end{split}
\end{equation}
By construction, $\sigma_1(\sqrt{r})$ and $\sigma_2(\sqrt{r})$ are roots of the minimal polynomial of $\sqrt{r}$ over $\Q$. Since $r \in \Q$, this minimal polynomial is $x^2-r$ and hence, $\sigma_1(\sqrt{r}),\sigma_2(\sqrt{r}) \in \{\pm\sqrt{r}\}$. Now if $\{\sigma_1(\sqrt{r}),\sigma_2(\sqrt{r})\}=\{\pm\sqrt{r}\}$ or $\sigma_1(\sqrt{r})=\sigma_2(\sqrt{r})=-\sqrt{r}$, taking the sums of $(\ref{Eq1})$, $(\ref{Eq2})$ and $(\ref{Eq3})$, we have $\pm\sqrt{r} = 3(a_9 \theta^9 + a_6 \theta^6 + a_3 \theta^3 + a_0)$. Substituting this into (\ref{Eq1}) we have a polynomial with degree less than twelve and has $\theta$ as a root, a contradiction. This implies that $\sigma_1(\sqrt{r})=\sigma_2(\sqrt{r})=\sqrt{r}$, so that by taking the sums of $(\ref{Eq1})$, $(\ref{Eq2})$ and $(\ref{Eq3})$, we have $3\sqrt{r} = 3(a_9 \theta^9 + a_6 \theta^6 + a_3 \theta^3 + a_0)$, and so $\sqrt{r}=a_9 \theta^9 + a_6 \theta^6 + a_3 \theta^3 + a_0$. 

Similar to Lemma \ref{squarelem}, it follows that either $a_0=a_6=0$ or $a_3=a_9=0$, which implies that one of $r(a^2-4b)$, $r(-a+2\sqrt{b})$ or $r(-a-2\sqrt{b})$ is in $\Q^2$. This proves the necessity. The sufficiency follows from construction.
\end{proof}

We now rule out the possibility for certain pairs of $(G_4,G_6)$. We then show that each of the remaining pairs of $(G_4,G_6)$ are possible and determine the exact possible Galois groups for each such pairs, then give numerical examples for each of the possible Galois groups.

\begin{lemma}
Let $f(x)=x^{12}+ax^6+b \in \Q[x]$ be an irreducible polynomial. Then $(G_4,G_6) \notin \{ (4T1,6T1), (4T1,6T2), (4T1,6T5) \}$.
\end{lemma}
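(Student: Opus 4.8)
The plan is to derive a contradiction purely from the square-class conditions furnished by Propositions~\ref{g4} and~\ref{g6}, together with one observation about signs of rationals. First I would record that since $f(x)$ is irreducible, so is $g_4(x)=x^4+ax^2+b$, and therefore $\disc(g_4)=16b(a^2-4b)^2 \ne 0$; in particular $b \ne 0$ and $a^2-4b \ne 0$. The hypothesis $G_4=4T1$ gives, via Proposition~\ref{g4}, that $b(a^2-4b) \in \Q^2$. On the other hand, each of the three groups $6T1$, $6T2$, $6T5$ occurs in Proposition~\ref{g6} only under the assumption $d=3(4b-a^2) \in \Q^2$, i.e.\ $-3(a^2-4b) \in \Q^2$; so assuming $G_6 \in \{6T1,6T2,6T5\}$ we may use this single common consequence, with no further case distinction among the three groups.

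The crux is then to multiply the two square conditions together:
\[
b(a^2-4b)\cdot\bigl(-3(a^2-4b)\bigr) = -3b(a^2-4b)^2 \in \Q^2,
\]
and, dividing by the nonzero square $(a^2-4b)^2$, we obtain $-3b \in \Q^2$. Since $b \ne 0$, this forces $-3b$ to be a positive rational, hence $b<0$; but then $a^2-4b \ge -4b > 0$, so $b(a^2-4b)<0$, which is incompatible with $b(a^2-4b)\in\Q^2$. This single contradiction eliminates all three pairs $(4T1,6T1)$, $(4T1,6T2)$, $(4T1,6T5)$ simultaneously.

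I do not expect a genuine obstacle here: the only thing to spot is that the two square conditions collapse to $-3b \in \Q^2$, after which the argument is a sign check over $\Q$ and uses none of the finer data in Proposition~\ref{g6} (the auxiliary cubic $r(x)$, or whether $b$ is a rational cube). The proof is therefore a couple of lines once the combination $b(a^2-4b)\cdot(-3(a^2-4b))$ is written down.
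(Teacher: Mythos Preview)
Your proof is correct and follows essentially the same route as the paper's: both arguments multiply the two square conditions to obtain $-3b(a^2-4b)^2\in\Q^2$, divide out the nonzero square $(a^2-4b)^2$ to get $-3b\in\Q^2$, and then finish with a sign check over $\Q$. The only cosmetic difference is that you feed the resulting inequality $b<0$ back into $b(a^2-4b)\in\Q^2$ to get the contradiction, whereas the paper feeds it back into $3(4b-a^2)\in\Q^2$; these are interchangeable.
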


\begin{proof} 
By Propositions \ref{g4} and \ref{g6}, this is equivalent to showing that we cannot have both $b(a^2-4b) \in \Q^2$ and $3(4b-a^2) \in \Q^2$. Suppose that this is not the case. Then their product is $-3b(a^2-4b)^2 \in \Q^2$, which implies that $-3b \in \Q^2$ and hence, $-\big(4(-3b)+a^2\big)=3(4b-a^2) \in \Q^2$ implies that $-3b=a=0$, a contradiction.
\end{proof}

\begin{proposition} \label{dodecicgal}
Let $f(x)=x^{12}+ax^6+b \in \Q[x]$ be an irreducible polynomial.
\begin{enumerate}
\item If $G_4$ is $4T1$ and
\begin{enumerate}
\item if $G_6$ is $6T3$, then $\Gal(f)$ is $12T11$.
\item if $G_6$ is $6T9$, then $\Gal(f)$ is $12T39$.
\end{enumerate}
\item If $G_4$ is $4T2$ and
\begin{enumerate}
\item if $G_6$ is $6T1$, then $\Gal(f)$ is $12T2$.
\item if $G_6$ is $6T2$, then $\Gal(f)$ is $12T3$.
\item if $G_6$ is $6T5$, then $\Gal(f)$ is $12T18$.
\item if $G_6$ is $6T3$, then $\Gal(f)$ is either $12T3$ or $12T10$.
\item if $G_6$ is $6T9$, then $\Gal(f)$ is either $12T16$ or $12T37$.
\end{enumerate}
\item If $G_4$ is $4T3$ and
\begin{enumerate}
\item if $G_6$ is $6T1$, then $\Gal(f)$ is $12T14$.
\item if $G_6$ is $6T2$, then $\Gal(f)$ is $12T15$.
\item if $G_6$ is $6T5$, then $\Gal(f)$ is $12T42$.
\item if $G_6$ is $6T3$, then $\Gal(f)$ is $12T12$, $12T13$ or $12T28$.
\item if $G_6$ is $6T9$, then $\Gal(f)$ is either $12T38$ or $12T81$.
\end{enumerate}
\end{enumerate}
Moreover, each of these possibilities does occur.
\end{proposition}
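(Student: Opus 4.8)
Since $f(x)=g_4(x^3)=g_6(x^2)$ is irreducible, so are $g_4$ and $g_6$, and Propositions \ref{g4} and \ref{g6} apply to them. The plan is to treat the twelve pairs $(G_4,G_6)$ not excluded by the previous lemma one at a time, in each case cutting down the transitive subgroups of $S_{12}$ that could equal $\Gal(f)$, and then to realize every group appearing on the resulting lists by an explicit pair $(a,b)$.

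Three conjugacy invariants carry most of the argument. \emph{(i) Quotients.} Because $f$ is even, the roots of $g_6$ are the squares of the roots of $f$, so the splitting field of $g_6$ is a normal subfield of that of $f$ and hence $G_6$ is a quotient of $\Gal(f)$; likewise, from $f(x)=g_4(x^3)$, the group $G_4$ is a quotient of $\Gal(f)$. (The stronger fact that the subfield content of $f$, computable in GAP \cite{gap}, contains both $G_4$ and $G_6$ occasionally eliminates a candidate that passes the bare quotient test.) \emph{(ii) Order.} Lemma \ref{lem1}(2) gives $|\Gal(f)|\le\min\{18|G_4|,4|G_6|\}$; note that $\Gal(f)$ need not embed in $G_4\times G_6$, which is why the extra factors $18$ and $4$ — coming from the cube roots, square roots and third root of unity adjoined in the proof of that lemma — appear. \emph{(iii) Parity.} Lemma \ref{lem1}(1) gives $\Gal(f)\le A_{12}$ exactly when $G_4$ is $4T2$. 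For most of the twelve pairs, (i)--(iii) already single out one $12Tj$. When two or more survive, I would add the degree sequence of the linear resolvent for $F=x_1+x_2$ — equivalently, the orbit lengths of $\Gal(f)$ on $2$-subsets of the roots — computed from \eqref{resultant} and Proposition \ref{soicher}; matching this against the transitive group tables \cite{butlermckay} leaves exactly the short lists in the statement.

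The pairs $(4T2,6T3)$, $(4T2,6T9)$, $(4T3,6T3)$ and $(4T3,6T9)$ are precisely those for which even the $2$-set orbit data fails to separate the surviving candidates, so at this stage I would only record that each listed group is a priori possible for them; distinguishing them is the task of Section \ref{G12b}, where Proposition \ref{prop2} — telling which quadratic subfields $\Q(\theta)$ can contain — supplies the missing invariant. For the ``each possibility does occur'' clause it suffices to give, for each of the sixteen target groups, one irreducible $f(x)=x^{12}+ax^6+b$ with that Galois group: for each I would verify irreducibility, read off $(G_4,G_6)$ via Propositions \ref{g4} and \ref{g6}, and — in the four ambiguous pairs — pin down $\Gal(f)$ inside its short list using the resolvent factorization pattern (or a direct Galois computation). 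I would present the sixteen examples in a table listing the quantities $b$, $b(a^2-4b)$, $3(4b-a^2)$, the class of $b$ modulo $\Q^3$, and the factorization of $x^3-3bx+ab$ that pin down $(G_4,G_6)$.

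The main obstacle lies in two places. First, one must check that the tower-degree inequalities in the proof of Lemma \ref{lem1}(2) are \emph{equalities} in the cases where the bound is supposed to be attained (such as $(4T1,6T9)$ or $(4T3,6T9)$): this is what forces $\Gal(f)$ up to the largest admissible group, and it has to be argued from the arithmetic of $K=\Q(\alpha,\beta)$, resp.\ $\Q(\gamma,\delta,\omega)$, not merely asserted. Second, the matching against the degree-$12$ transitive group tables is delicate because several of those groups coincide in order, in the pair of quotients $(G_4,G_6)$, and in $2$-set orbit lengths all at once; it is exactly this residual coincidence in the four pairs above that no conjugacy-invariant count resolves, which is why those cases are deferred to Section \ref{G12b}.
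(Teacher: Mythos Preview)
Your approach is essentially the paper's: filter the 301 transitive subgroups of $S_{12}$ by (i) subfield content containing both $G_4$ and $G_6$, (ii) containment in $A_{12}$ iff $G_4=4T2$, and (iii) the order bound of Lemma~\ref{lem1}(2), then exhibit one polynomial per surviving group. The paper does exactly this and nothing more; in particular, those three filters already produce precisely the short lists in the statement, so your proposed intermediate use of the $x_1+x_2$ resolvent is unnecessary here (the paper defers all resolvent computations to Proposition~\ref{dodecic2}, where they are genuinely needed to split $12T12$ from $12T13$).

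Your ``main obstacle'' (1) is a phantom. You never have to argue that the tower inequalities in Lemma~\ref{lem1}(2) are equalities: once the filtering leaves only the groups listed, the explicit examples you yourself propose (and which the paper supplies in Table~\ref{tabley2}) establish that each listed group is attained. No theoretical lower bound on $|\Gal(f)|$ is required anywhere. Likewise, obstacle (2) dissolves: because the three filters alone already give the stated lists, there is no ``residual coincidence'' to worry about at this stage, and the ambiguity in the four pairs $(4T2,6T3)$, $(4T2,6T9)$, $(4T3,6T3)$, $(4T3,6T9)$ is simply recorded and passed to Section~\ref{G12b}, exactly as you say.
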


\begin{proof}
We examine all 301 conjugacy classes of transitive subgroups of $S_{12}$. In view of Lemma \ref{lem1}, for each possible pair $(G_4,G_6)$ we filter a list of possible Galois groups $\Gal(f)$ in Table \ref{tabley1} based on the following criteria:
\begin{itemize} \itemsep0em
\item Have both $G_4$ and $G_6$ in the subfield content of $f(x)$,
\item Is contained in $A_{12}$ if $G_4$ is $4T2$, is not contained in $A_{12}$ otherwise,
\item Have order at most $18|G_4|$ and $4|G_6|$.
\end{itemize}
The numerical examples in Table \ref{tabley2} show that each of these possibilities does occur.
\end{proof}

\begin{table}[h!]
\caption{Possible Galois groups (by $T$ number) of irreducible polynomials $f(x)=x^{12}+ax^6+b \in \Q[x]$ based on the Galois group $G_4$ and $G_6$ of $x^4+ax^2+b$ and $x^6+ax^3+b$, respectively. \label{tabley1}}
\begin{center}
{\begin{tabular}{|c|C{1.5cm}|C{1.5cm}|C{1.5cm}|C{1.5cm}|C{1.5cm}|} \hline
\diagbox{$\mathbf{G_4}$}{$\mathbf{G_6}$} & $6T1$ & $6T2$ & $6T5$ & $6T3$ & $6T9$ \\ \hline
$4T1$ &&&& 11 & 39 \\ \hline
$4T2$ & 2 & 3 & 18 & 3,10 & 16,37 \\ \hline
$4T3$ & 14 & 15 & 42 & 12,13,28 & 38,81 \\ \hline
\end{tabular}}
\end{center}
\end{table}

\begin{table}[h!]
\caption{Numerical examples of irreducible polynomials $x^{12}+ax^6+b \in \Q[x]$ with Galois group $G_{12}$, where $x^4+ax^2+b$ and $x^6+ax^3+b$ have Galois groups $G_4$ and $G_6$, respectively. \label{tabley2}}
\begin{center}
{\begin{tabular}{|c|c|c|c|} \hline
$\mathbf{G_4}$ & $\mathbf{G_6}$ & $\mathbf{G_{12}}$ & \textbf{Polynomial} \\ \hline
$4T1$ & $6T3$ & $12T11$ & $x^{12}+8x^6+8$ \\ \hline
$4T1$ & $6T9$ & $12T39$ & $x^{12}+4x^6+2$ \\ \hline
$4T2$ & $6T1$ & $12T2$ & $x^{12}-x^6+1$ \\ \hline
$4T2$ & $6T2$ & $12T3$ & $x^{12}+572x^6+470596$ \\ \hline
$4T2$ & $6T5$ & $12T18$ & $x^{12}+2x^6+4$ \\ \hline
$4T2$ & $6T3$ & $12T3$ & $x^{12}+5x^6+1$ \\ \hline
$4T2$ & $6T3$ & $12T10$ & $x^{12}+3x^6+1$ \\ \hline
$4T2$ & $6T9$ & $12T16$ & $x^{12}-x^6+4$ \\ \hline
$4T2$ & $6T9$ & $12T37$ & $x^{12}+x^6+4$ \\ \hline
$4T3$ & $6T1$ & $12T14$ & $x^{12}+9x^6+27$ \\ \hline
$4T3$ & $6T2$ & $12T15$ & $x^{12}+3$ \\ \hline
$4T3$ & $6T5$ & $12T42$ & $x^{12}+x^6+7$ \\ \hline
$4T3$ & $6T3$ & $12T12$ & $x^{12}+x^6-27$ \\ \hline
$4T3$ & $6T3$ & $12T13$ & $x^{12}-3$ \\ \hline
$4T3$ & $6T3$ & $12T28$ & $x^{12}+2$ \\ \hline
$4T3$ & $6T9$ & $12T38$ & $x^{12}+4x^6-2$ \\ \hline
$4T3$ & $6T9$ & $12T81$ & $x^{12}+x^6+2$ \\ \hline
\end{tabular}}
\end{center}
\end{table}

We remark that if $G_4$ is $4T1$ or $G_6 \in \{6T1,6T2,6T5\}$, then $\Gal(f)$ can be identified uniquely, whereas there are more than one possible $\Gal(f)$ when $G_4 \in \{4T2,4T3\}$ and $G_6 \in \{6T3,6T9\}$.

\subsection{Classification} \label{G12b}

If $G_4 \in \{4T2,4T3\}$ and $G_6 \in \{6T3,6T9\}$, most of the possible $\Gal(f)$ can be distinguished by their orders (see Table \ref{tabley3}). This motivates us to calculate the degree of the splitting field of $f(x)$ over $\Q$, and gives us the following.

\begin{table}[h!] 
\caption{Possible Galois groups $G_{12}$ of irreducible polynomials $x^{12}+ax^6+b$ and their orders, where $x^4+ax^2+b$ and $x^6+ax^3+b$ have Galois groups $G_4 \in \{4T2,4T3\}$ and $G_6 \in \{6T3,6T9\}$, respectively. \label{tabley3}}
\begin{center} \small
{\begin{tabular}{|c|c|c|c|c|c|c|c|c|c|} \hline
$\mathbf{(G_4,G_6)}$ & \multicolumn{2}{c|}{$(4T2,6T3)$} & \multicolumn{2}{c|}{$(4T2,6T9)$} & \multicolumn{3}{c|}{$(4T3,6T3)$} & \multicolumn{2}{c|}{$(4T3,6T9)$} \\ \hline
$\mathbf{G_{12}}$ & $12T3$ & $12T10$ & $12T16$ & $12T37$ & $12T12$ & $12T13$ & $12T28$ & $12T38$ & $12T81$ \\ \hline
$\textbf{Order}$ & 12 & 24 & 36 & 72 & 24 & 24 & 48 & 72 & 144 \\ \hline
\end{tabular}}
\end{center}
\end{table}

\begin{proposition} \label{dodecic1}
Let $f(x)=x^{12}+ax^6+b \in \Q[x]$ be an irreducible polynomial.
\begin{enumerate}
\item If $(G_4,G_6)=(4T2,6T3)$, then $\Gal(f)$ is
\begin{enumerate}
\item $12T3$ if $3(a+2\sqrt{b}) \in \Q^2$ or $3(a-2\sqrt{b}) \in \Q^2$.
\item $12T10$ otherwise.
\end{enumerate}
\item If $(G_4,G_6)=(4T2,6T9)$, then $\Gal(f)$ is
\begin{enumerate}
\item $12T16$ if $3(a+2\sqrt{b}) \in \Q^2$ or $3(a-2\sqrt{b}) \in \Q^2$.
\item $12T37$ otherwise.
\end{enumerate}
\item If $(G_4,G_6)=(4T3,6T3)$, then $\Gal(f)$ is
\begin{enumerate}
\item either $12T12$ or $12T13$ if $-3b \in \Q^2$ or $3b(4b-a^2) \in \Q^2$.
\item $12T28$ otherwise.
\end{enumerate}
\item If $(G_4,G_6)=(4T3,6T9)$, then $\Gal(f)$ is
\begin{enumerate}
\item $12T38$ if either $-3b \in \Q^2$ or $3b(4b-a^2) \in \Q^2$.
\item $12T81$ otherwise.
\end{enumerate}
\end{enumerate}
\end{proposition}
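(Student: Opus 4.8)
The plan is to determine the order $|\Gal(f)|$. By Proposition~\ref{dodecicgal} together with the orders listed in Table~\ref{tabley3}, this order is constrained to one of exactly two values in each of the four cases, and in every case these two values differ by precisely a factor of $2$. I will show that the smaller value occurs if and only if $\omega\in N_4$, where $\omega$ is a primitive cube root of unity and $N_4$ is the splitting field of $g_4(x)=x^4+ax^2+b$, and I will then translate the condition $\omega\in N_4$ into the asserted rational-square conditions.

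For the setup, write $\pm\alpha,\pm\beta$ for the roots of $g_4$. As in the proof of Lemma~\ref{lem1}(2), for fixed cube roots $\sqrt[3]{\alpha},\sqrt[3]{\beta}$ the splitting field of $f$ is $M=N_4(\sqrt[3]{\alpha},\sqrt[3]{\beta},\omega)$, and $[N_4:\Q]=|G_4|$. The key point is that $[M:N_4]$ is a power of $3$ when $\omega\in N_4$ and twice a power of $3$ when $\omega\notin N_4$: over a field containing $\omega$, adjoining cube roots of elements of that field yields an extension of degree a power of $3$ by Kummer theory, and since $\alpha,\beta\in N_4$ this applies over $N_4$ itself when $\omega\in N_4$, and over the quadratic extension $N_4(\omega)$ of $N_4$ otherwise. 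Hence $v_2(|\Gal(f)|)=v_2(|G_4|)+\varepsilon$, where $\varepsilon=0$ if $\omega\in N_4$ and $\varepsilon=1$ otherwise. Checking Table~\ref{tabley3} case by case, the smaller candidate order always has $2$-adic valuation equal to $v_2(|G_4|)$ while the larger exceeds it by $1$; therefore $|\Gal(f)|$ is the smaller candidate precisely when $\omega\in N_4$, and Proposition~\ref{dodecicgal} then identifies $\Gal(f)$. (Since $12T12$ and $12T13$ both have order $24$, for $(G_4,G_6)=(4T3,6T3)$ this argument yields only $\Gal(f)\in\{12T12,12T13\}$, which is exactly what the proposition asserts.)

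It remains to decide when $\omega\in N_4$, equivalently when $\sqrt{-3}\in N_4$. If $G_4$ is $4T2$, then $b\in\Q^2$ by Proposition~\ref{g4}, so the stem field $\Q(\theta)$ of $g_4$ has degree $4=|G_4|$ and thus $N_4=\Q(\theta)$; applying Lemma~\ref{squarelem} to $g_4$ with $r=-3$ (a non-square) and using $\omega\in N_4\iff -3\in N_4^2$, we conclude that $\omega\in N_4$ iff one of $-3(a^2-4b)$, $3(a-2\sqrt b)$, $3(a+2\sqrt b)$ is a rational square. If $G_4$ is $4T3$, then $b,\,b(a^2-4b)\notin\Q^2$ by Proposition~\ref{g4}; here $\Q(\alpha\beta)=\Q(\sqrt b)$ and $\Q(\alpha^2)=\Q(\sqrt{a^2-4b})$ (the latter since $a^2-4b\notin\Q^2$ as $g_4$ is irreducible) are distinct proper quadratic subfields of $N_4$, and their compositum is the fixed field of the centre of $\Gal(N_4/\Q)\cong D_4$, a $V_4$-extension of $\Q$ whose three quadratic subfields are $\Q(\sqrt b)$, $\Q(\sqrt{a^2-4b})$, $\Q(\sqrt{b(a^2-4b)})$; since every quadratic subfield of $N_4$ lies in this fixed field, $\omega\in N_4$ iff one of $-3b$, $-3(a^2-4b)$, $3b(4b-a^2)$ is a rational square. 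Finally, in all four cases $G_6\in\{6T3,6T9\}$ forces $d=3(4b-a^2)=-3(a^2-4b)\notin\Q^2$ by Proposition~\ref{g6}, which discards the disjunct involving $a^2-4b$ and leaves exactly the conditions stated in the proposition.

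The main obstacle is the identification of the quadratic subfields of $N_4$ (the step that, for $G_4$ being $4T2$, is packaged into Lemma~\ref{squarelem}): one must exploit the precise structure of $\Gal(N_4/\Q)$ dictated by Proposition~\ref{g4}, namely the Klein four-group when $G_4$ is $4T2$ and the dihedral group $D_4$ when $G_4$ is $4T3$, to be sure there are exactly the three listed subfields and that the exhibited radicands ($b$, $a^2-4b$, and their product, or $a^2-4b$ and $-a\pm 2\sqrt b$) really generate them. By comparison the Kummer-theoretic step and the passage to the final square conditions via $d\notin\Q^2$ are routine.
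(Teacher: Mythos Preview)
Your proof is correct and takes a genuinely different route from the paper's. The paper works over the degree-$12$ stem field $K=\Q(\theta)$ of $f$: it identifies the splitting field explicitly as $\Q(\theta,\sqrt{-3},\sqrt[6]{b})$, shows by a direct (and somewhat ad hoc) computation that $\sqrt[3]{b}\in K$ whenever $G_6=6T3$, argues by contradiction on the order that $[K(\sqrt{-3},\sqrt b,\sqrt[3]{b}):K(\sqrt{-3},\sqrt b)]=3$ when $G_6=6T9$, and then invokes Proposition~\ref{prop2} (the degree-$12$ analogue of Lemma~\ref{squarelem}) to test whether $-3$, $b$, $-3b$ lie in $K^2$. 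You instead work over the splitting field $N_4$ of $g_4$: a clean $2$-adic valuation argument via Kummer theory reduces the factor-of-$2$ ambiguity in $|\Gal(f)|$ to the single question of whether $\sqrt{-3}\in N_4$, and you settle that either by Lemma~\ref{squarelem} applied to $g_4$ (for $G_4=4T2$, where $N_4$ coincides with the quartic stem field) or by enumerating the three quadratic subfields of a $D_4$-extension (for $G_4=4T3$). Your approach bypasses both Proposition~\ref{prop2} and the explicit cube-root identity, and the valuation trick makes the dichotomy very transparent; the paper's approach, by contrast, treats all four cases uniformly through the same tower $K\subset K'\subset L$ and yields the explicit description of the splitting field over $\Q(\theta)$ as a byproduct.
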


\begin{proof}
 Let $\theta$ be a root of $f(x)$. Factoring $f(x)$ over $\Q(\theta)$, we have 
\begin{eqnarray*}
f(x)=(x-\theta)(x+\theta)\left(x-\frac{\sqrt[6]{b}}{\theta}\right)\left(x+\frac{\sqrt[6]{b}}{\theta}\right)f_1(x)f_1(-x)f_2(x)f_2(-x)
\end{eqnarray*}
where
\begin{eqnarray*}
f_1(x) = x^2+\theta x+\theta^2, \quad f_2(x)=x^2+\frac{\sqrt[6]{b}}{\theta}x+\frac{\sqrt[3]{b}}{\theta^2}.
\end{eqnarray*}
It is then easy to verify that the roots of $f(x)$ are
\begin{eqnarray*}
\pm\theta, \ \pm\theta\omega, \ \pm\theta\omega^2, \ \pm\frac{\sqrt[6]{b}}{\theta}, \ \pm\frac{\sqrt[6]{b}}{\theta}\omega, \ \pm\frac{\sqrt[6]{b}}{\theta}\omega^2,
\end{eqnarray*}
where $\omega=(-1+\sqrt{-3})/2$ is a primitive third root of unity. It follows that the splitting field of $f(x)$ is $\Q(\theta,\sqrt{-3},\sqrt[6]{b})=\Q(\theta,\sqrt{-3},\sqrt{b},\sqrt[3]{b})$. Now let $K=\Q(\theta)$, $K'=K(\sqrt{-3},\sqrt{b})$ and $L=K'(\sqrt[3]{b})$, so $L$ is the splitting field of $f(x)$. We have
\begin{eqnarray*}
[L:\Q]=[L:K'][K':K][K:\Q]=12[L:K'][K':K].
\end{eqnarray*}
We note that $K'$ is a biquadratic extension of $K$, and it follows that
\begin{eqnarray*}
[K':K] = 
\begin{cases}
1, & \text{if both $-3$ and $b$ are in $\Q(\theta)^2$} \\
2, & \text{if exactly one of $-3$, $b$ or $-3b$ is in $\Q(\theta)^2$} \\
4, & \text{if none of $-3$, $b$ and $-3b$ are in $\Q(\theta)^2$}
\end{cases}.
\end{eqnarray*}
We also note that $[L:K'] \in \{1,3\}$. If $G_6$ is $6T3$, then either $b \in \Q^3$ or $r(x)=x^3-3bx+ab$ is reducible. For the former subcase, clearly $\sqrt[3]{b} \in \Q \subset K'$. For the latter subcase, Proposition \ref{g6} implies that $r(x)$ has a rational root $r$, so we may write $a=(3br-r^3)/b$. Then it can be verified that
\begin{eqnarray*}
\left( \frac{r}{b-r^2} \theta^{10} +  \frac{-b^2+3br^2-r^4}{b(b-r^2)} \theta^4 \right)^3 = b.
\end{eqnarray*}
It follows that $b \in K \subset K'$. As such, for either subcase we have $[L:K']=1$. 

Now if $G_6$ is $6T9$, we claim that $[L:K']=3$. Suppose to the contrary that $[L:K']=1$. Note that the orders of possible $\Gal(f)$ are multiples of 36, but $[K':K]$ is not a multiple of three, a contradiction. It follows that
\begin{eqnarray*}
[L:K'] = 
\begin{cases}
1, & \text{if $G_6$ is $6T3$} \\
3, & \text{if $G_6$ is $6T9$}
\end{cases}.
\end{eqnarray*}

We first consider the case where $G_4$ is $4T2$. If $G_6$ is $6T3$, then $\Gal(f)$ is $12T3$ if and only if $[K':K]=1$. This occurs if and only if $-3 \in \Q(\theta)^2$, since $b \in \Q^2 \subset \Q(\theta)^2$. Likewise if $G_6$ is $6T9$, then $\Gal(f)$ is $12T16$ if and only if $[K':K]=1$ if and only if $-3 \in \Q(\theta)^2$. By Proposition \ref{prop2}, $-3 \in \Q(\theta)^2$ if and only if $3(4b-a^2)$, $3(a+2\sqrt{b})$ or $3(a-2\sqrt{b})$ is in $\Q^2$, but $3(4b-a^2) \notin \Q^2$ by Proposition \ref{g6}. This proves $(1)$ and $(2)$.
\par We now consider the case where $G_4$ is $4T3$. If $G_6$ is $6T3$, then $\Gal(f)$ is either $12T12$ or $12T13$ if and only if $[K':K]=2$. By Proposition \ref{prop2}, $-3 \in \Q(\theta)^2$ if and only if $3(4b-a^2) \in \Q^2$, whereas $b \in \Q(\theta)^2$ if and only if $b \in \Q^2$ or $b(a^2-4b) \in \Q^2$, but none of them are in $\Q^2$ by Propositions \ref{g4} and \ref{g6}, and hence, $[K':K]=2$ if and only if $-3b \in \Q(\theta)^2$. Likewise if $G_6$ is $6T9$, then $\Gal(f)$ is $12T38$ if and only if $[K':K]=2$ if and only if $-3b \in \Q(\theta)^2$. By Proposition \ref{prop2}, $-3b \in \Q(\theta)^2$ if and only if $-3b \in \Q^2$ or $3b(4b-a^2) \in \Q^2$. This proves $(3)$ and $(4)$.
\end{proof}


To complete our classification, we need to distinguish between $12T12$ and $12T13$. 

\begin{proposition} \label{dodecic2}
Let $f(x)=x^{12}+ax^6+b \in \Q[x]$ be an irreducible polynomial with $(G_4,G_6)=(4T3,6T3)$ and either $-3b \in \Q^2$ or $3b(4b-a^2) \in \Q^2$.
\begin{enumerate}
\item If $r(x)$ is reducible, then $\Gal(f)$ is
\begin{enumerate}
\item $12T12$ if $3b(4b-a^2) \in \Q^2$.
\item $12T13$ if $-3b \in \Q^2$.
\end{enumerate}
\item If $b \in \Q^3$, then $\Gal(f)$ is
\begin{enumerate}
\item $12T12$ if $-3b \in \Q^2$.
\item $12T13$ if $3b(4b-a^2) \in \Q^2$.
\end{enumerate}
\end{enumerate}
\end{proposition}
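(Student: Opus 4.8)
The plan is to pin down, within the fixed pair $(G_4,G_6)=(4T3,6T3)$ and under the standing hypothesis that exactly one of $-3b$, $3b(4b-a^2)$ lies in $\Q^2$, which quadratic subfield of $\Q(\sqrt{-3},\sqrt{b})$ actually survives inside $\Q(\theta)$, and then to match that subfield data against the distinguishing invariant of $12T12$ versus $12T13$. From the proof of Proposition \ref{dodecic1} we already know $[L:\Q]=24$ and $[K':K]=2$, so exactly one of $-3$, $b$, $-3b$ is a square in $\Q(\theta)$. By Proposition \ref{prop2} together with Propositions \ref{g4} and \ref{g6}: $-3\in\Q(\theta)^2$ forces $3(4b-a^2)\in\Q^2$, which is impossible for $6T3$; $b\in\Q(\theta)^2$ forces $b\in\Q^2$ or $b(a^2-4b)\in\Q^2$, both impossible for $4T3$; hence it is always $-3b$ that is the square in $\Q(\theta)$, and Proposition \ref{prop2} says this happens iff $-3b\in\Q^2$ or $3b(4b-a^2)\in\Q^2$ (the third alternative $-3b(4b-a^2)=-3b\cdot(a^2-4b)$ combined with $a^2-4b$ arising would again collapse via the same Galois-stability argument, so only these two hold). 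So far this only recovers the dichotomy of Proposition \ref{dodecic1}(3); the real work is to split the two cases.

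The key step is to produce an explicit element $\eta\in\Q(\theta)$ with $\eta^2=-3b$ in each of the two regimes (the two cases of part (1), where $r(x)$ is reducible, and the two cases of part (2), where $b\in\Q^3$) and to read off from the construction whether $\sqrt{b}$ or $\sqrt{-3}$ individually lies in $\Q(\theta)$ — equivalently, whether the unique index-$2$ subfield of $\Q(\theta)$ over $\Q(\theta)\cap(\text{fixed field})$ is $\Q(\sqrt b)$ or $\Q(\sqrt{-3})$. Concretely: when $r(x)$ is reducible we have a rational root $r$ and $a=(3br-r^3)/b$, and the proof of Proposition \ref{dodecic1} already exhibited $\bigl(\tfrac{r}{b-r^2}\theta^{10}+\tfrac{-b^2+3br^2-r^4}{b(b-r^2)}\theta^4\bigr)^3=b$; I would instead build a degree-$\le 11$ polynomial expression in $\theta$ whose square is $-3b$ by the same template (an even-in-$\theta$ combination, using the structure of Case 1 of Lemma \ref{squarelem} lifted through $\theta\mapsto\theta^{?}$), then check which of $-3$ (directly a square whenever $3b(4b-a^2)\in\Q^2$ combined with $-3b\in\Q(\theta)^2$) or $b$ becomes a square, giving the claimed $12T12$/$12T13$ split. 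When $b\in\Q^3$, $\sqrt[3]{b}\in\Q$ so $L=K(\sqrt{-3},\sqrt b)$ with $\sqrt b$ genuinely irrational (since $b\notin\Q^2$ as $G_4=4T3$), and the surviving square in $\Q(\theta)$ is $-3b$; here the construction of $\sqrt{-3b}\in\Q(\theta)$ comes from an odd-degree (Case 2 of Lemma \ref{squarelem} type) expression in $\theta$, and one reads off that it is $\Q(\sqrt{-3})$ (resp.\ $\Q(\sqrt{b})$) sitting inside $\Q(\theta)$ according to whether $-3b\in\Q^2$ (resp.\ $3b(4b-a^2)\in\Q^2$).

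Finally I would convert the subfield information into the $T$-number. The groups $12T12$ and $12T13$ both have order $24$ but are distinguished by a GAP-computable invariant — most cleanly, by which quadratic subfield appears in the subfield content of $f(x)$, or equivalently by the way the point stabilizer sits relative to the normal subgroup generated by the $\sqrt{-3}$-versus-$\sqrt b$ fixed fields. I would state once, citing \cite{gap} and the factorization in the proof of Proposition \ref{dodecic1}, that $\Gal(f)=12T13$ precisely when $\sqrt{-3}\in\Q(\theta)$ (so the splitting field is obtained from $\Q(\theta,\sqrt{-3})$ by adjoining $\sqrt b$) and $\Gal(f)=12T12$ precisely when $\sqrt b\in\Q(\theta)$, then combine this with the case analysis of the previous paragraph to obtain the four listed conclusions. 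The main obstacle I anticipate is the bookkeeping in the explicit construction of $\sqrt{-3b}\in\Q(\theta)$: verifying the Case-1-versus-Case-2 alternative of the $\Q(\theta)$-square (i.e.\ whether the witnessing element is even or odd as a polynomial in $\theta$) and linking that parity cleanly to "$-3b\in\Q^2$" versus "$3b(4b-a^2)\in\Q^2$" without a long resultant computation; I expect the identities to fall out of the same $\theta\mapsto i\theta$-type Galois-equivariance argument used in Lemma \ref{squarelem} and Proposition \ref{prop2}, applied to the subextension $\Q(\theta^3)=\Q$-relative structure, but it will require care.
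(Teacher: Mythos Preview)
Your plan contains an internal contradiction that makes the proposed distinguishing invariant unusable. In your first paragraph you correctly deduce, from Proposition~\ref{prop2} together with the $4T3$/$6T3$ constraints, that neither $-3$ nor $b$ can be a square in $\Q(\theta)$, and that the only element of $\{-3,b,-3b\}$ that is a square in $\Q(\theta)$ is $-3b$. But then your second and third paragraphs propose to decide between $12T12$ and $12T13$ by ``reading off whether $\sqrt{b}$ or $\sqrt{-3}$ individually lies in $\Q(\theta)$'' and you assert that ``$\Gal(f)=12T13$ precisely when $\sqrt{-3}\in\Q(\theta)$ and $\Gal(f)=12T12$ precisely when $\sqrt b\in\Q(\theta)$''. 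You have already shown both alternatives are impossible, so this criterion is vacuous. Relatedly, the ``parity'' of the witnessing element for $\sqrt{-3b}\in\Q(\theta)$ (Case~1 versus Case~2 of Lemma~\ref{squarelem}) does not give new information: when $G_4=4T3$ we have $b\notin\Q^2$, so Case~2 cannot occur, and the dichotomy you recover is exactly the \emph{input} hypothesis ``$-3b\in\Q^2$ versus $3b(4b-a^2)\in\Q^2$'', not a group-theoretic output. Most seriously, the proposition asserts that the correspondence \emph{flips} between parts~(1) and~(2): $-3b\in\Q^2$ gives $12T13$ when $r(x)$ is reducible but $12T12$ when $b\in\Q^3$. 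Nothing in your outline accounts for why the same square-class condition should yield different $T$-numbers in the two regimes, and a purely subfield-of-$\Q(\theta)$ argument cannot see this flip, since the subfield lattice of $\Q(\theta)$ is the same in both situations.

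The paper takes a completely different route: it computes the linear resolvent $R(x)$ of $f$ corresponding to $x_1+x_2$ via~(\ref{resultant}) and observes that its factorization pattern is $6,12^5$ for $12T12$ and $6,12^3,24$ for $12T13$. The resolvent is identified concretely as $x^6\cdot f(x)\cdot R_1(x^6)\cdot S(x^2)\cdot S_1(x)$ with $\deg S_1=24$, so $\Gal(f)=12T12$ iff $S_1$ is reducible, and moreover the degree-$12$ factor $S(x^2)$ has $\Gal(S)=6T3$ for $12T12$ and $\Gal(S)=6T2$ for $12T13$. In case~(1) the rational root $r$ of $r(x)$ allows one to write $S(x)=x^6+Ax^3+B$ with explicit $A,B$, and Proposition~\ref{g6} gives $\Gal(S)=6T2$ iff $3(4B-A^2)\in\Q^2$, which simplifies to $-3b\in\Q^2$. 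In case~(2) the hypothesis $b=\beta^3$ with $\beta\in\Q$ gives $S_1$ and $S$ explicitly; when $-3b\in\Q^2$ one exhibits a factorization of $S_1$, and when $3b(4b-a^2)\in\Q^2$ one computes a further resolvent $\widetilde R(x)$ of $S$ via~(\ref{resultant2}) and exhibits a factorization there. The flip between (1) and (2) thus emerges from two genuinely different explicit computations, not from a single invariant of $\Q(\theta)$; your proposal is missing the mechanism that produces it.
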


\begin{proof}
We use the relation (\ref{resultant}) to compute the linear resolvent $R(x)$ of $f(x)$ corresponding to $x_1+x_2$. The list of degrees of irreducible factors of $R(x)$ are $6,12^5$ and $6,12^3,24$ for $12T12$ and $12T13$, respectively. Now if $(G_4,G_6)=(4T3,6T3)$ then either $r(x)=x^3-3bx+ab$ is reducible or $b \in \Q^3$. For either case, we have $R(x)$ is the product of $x^6$ with three irreducible degree twelve polynomials namely, $f(x)=x^{12}+ax^6+b$, $R_1(x^6)=x^{12}-27ax^6+729b$, and $S(x^2)$, and a degree 24 polynomial $S_1(x)$, with $S_1(x)$ reducible if and only if $\Gal(f)$ is $12T12$.
\par We determine the three length twelve orbits for the action of $G \in \{12T12,12T13\}$ on the 2-sets of twelve letters. For each of these orbits $O$, we can determine the image of the permutation representation of $G$ acting on $O$. For $G=12T12$, each of these three orbits corresponds to $12T12$ whereas for $G=12T13$, two orbits correspond to $12T13$ and the third orbit corresponds to $12T15$. Suppose to the contrary that $R_1(x^6)$ has Galois group $12T15$. Then $R_1(x^3)$ has Galois group $6T2$, contradicting Proposition \ref{g6}. It follows that if $G$ is $12T13$, then the Galois group of $S(x^2)$ is $12T15$. In each case, the subfield content of $S(x^2)$ contains a unique transitive subgroup $H_6$ of $S_6$. Since $S(x)$ defines a quadratic subfield of the field defined by $S(x^2)$, it follows that the Galois group of $S(x)$ is $H_6$. In particular, we have $H_6=6T3$ if $\Gal(f)=12T12$ and $H_6=6T_2$ if $\Gal(f)=12T13$.

If $r(x)$ is reducible, then $r(x)$ has a rational root $r$, so we may write $a=(3br-r^3)/b$. For such case, $S(x)=x^6+Ax^3+B$ where 
\begin{eqnarray*}
A = \frac{-2r(r^2+12b)}{b} \quad \text{ and } \quad B = \frac{(r^2-4b)^3}{b^2}.
\end{eqnarray*}
By Proposition \ref{g6}, $\Gal(S)$ is $6T2$ if $3(4B-A^2) \in \Q^2$ and $6T3$ otherwise. Now
\begin{eqnarray*}
3(4B-A^2)=\frac{-48(3r^2+4b)^2}{b}=-3b\left(\frac{4(3r^2+4b)}{b}\right)^2,
\end{eqnarray*}
and so $3(4B-A^2) \in \Q^2$ if and only if $-3b \in \Q^2$. This proves $(1)$.
\par Now if $b \in \Q^3$, let $\beta \in \Q$ be the principal cube root of $b$. For such case,
\begin{eqnarray*}
\begin{split}
S_1(x) &= x^{24} + 18 \beta x^{20} + 4 a x^{18} + 267 \beta^2 x^{16} + 18 a \beta x^{14} + (6 a^2 + 1018 b) x^{12} \\ &- 762 a \beta^2 x^{10} + (-18 a^2 \beta + 3177 b\beta) x^8 + (4 a^3 - 1042 a b) x^6 \\ &+ (267 a^2 \beta^2 + 228 b\beta^2) x^4 + (-18 a^3 \beta + 72 a b\beta) x^2 + a^4 - 8 a^2 b + 16 b^2.
\end{split}
\end{eqnarray*}
 Now if $-3b \in \Q^2$, then $\beta=-3q^2$ for some $q \in \Q$ and $S_1(x)$ factors as the product of $S_0(q)$ and $S_0(-q)$, where
\begin{eqnarray*}
\begin{split}
S_0(t) &= x^{12} + 18 t x^{10} + 135 t^2 x^8 + (2 a + 486 t^3) x^6 \\ &+ (18 a t + 837 t^4) x^4 + (27 a t^2 + 486 t^5) x^2 + 
a^2 + 108 t^6.
\end{split}
\end{eqnarray*}
This proves $(2)(a)$. We also have
\begin{eqnarray*}
S(x)=x^6-18\beta x^4+2a x^3+57\beta^2 x^2+18a\beta x+a^2-4 b.
\end{eqnarray*}
We use the relation (\ref{resultant2}) to compute the linear resolvent $\widetilde{R}(x)$ of $S(x)$ corresponding to $x_1 x_2$. The list of degrees of irreducible factors of $S(x)$ are $3^3,6$ and $3,6^2$ for $\Gal(S)=6T2$ and $\Gal(S)=6T3$, respectively. $\widetilde{R}(x)$ is the product of $x^3+6\beta x^2+9\beta^2x+4b-a^2$ with two degree six polynomials $\widetilde{R}_1(x)$ and $\widetilde{R}_2(x)$ where
\begin{eqnarray*}
\begin{split}
\widetilde{R}_1(x) &= x^6 - 18 \beta x^5 + 105 \beta^2 x^4 + (-2 a^2 - 224 b) x^3 + (-18 a^2 \beta + 216 b\beta) x^2 \\ &+ (24 a^2 \beta^2 - 96 b\beta^2) x + a^4 - 8 a^2 b + 16 b^2,
\end{split}
\end{eqnarray*}
\begin{eqnarray*}
\begin{split}
\widetilde{R}_2(x) &= x^6 + 30 \beta x^5 + 297 \beta^2 x^4 + (-2 a^2 + 1088 b) x^3 +  (78 a^2 \beta + 984 b\beta) x^2 \\ &+  (-72 a^2 \beta^2 + 288 b\beta^2) x + a^4 - 8 a^2 b + 16 b^2.
\end{split}
\end{eqnarray*}
Now if $3b(4b-a^2) \in \Q^2$, then $4b-a^2=3bq^2$ for some $q \in \Q$, and so $b=a^2/(4-3q^2)$. Let $v=a(4-3q^2)$ so that $b=v^2/(4-3q^2)^3$. Since $b \in \Q^3$, it follows that $v \in \Q^3$ and hence, $v=u^3$ for some $u \in \Q$. Therefore, $a = u^3/(4 - 3 q^2)$ and $\beta = u^2/(4 - 3 q^2)$. Then $\widetilde{R}_2(x)$ factors as the product of $\widetilde{R}_0(q)$ and $\widetilde{R}_0(-q)$, where
\begin{eqnarray*}
\widetilde{R}_0(t)=x^3+\frac{15u^2}{4-3t^2}x^2+\frac{18(t+2)u^4}{(4-3t^2)^2}x+\frac{3t^2u^6}{(4-3t^2)^3}.
\end{eqnarray*}
This proves $(2)(b)$.
\end{proof}

\subsection{Summary} \label{G12d}

By combining Propositions \ref{g4}, \ref{g6}, \ref{dodecicgal}, \ref{dodecic1} and \ref{dodecic2}, we have the following algorithm that provides an elementary characterization for each of the sixteen possible Galois groups of $x^{12}+ax^6+b$.

\begin{theorem} \label{mthm} 
Let $f(x)=x^{12}+ax^6+b \in \Q[x]$ be an irreducible polynomial and $r(x)=x^3-3bx+ab$. Then the following algorithm returns $\Gal(f)$.
\begin{enumerate} \itemsep0em
\item If $b(a^2-4b) \in \Q^2$, then
\begin{enumerate} \itemsep0em
\item If $b \in \Q^3$ or $r(x)$ is reducible, return $12T11$ and terminate.
\item Otherwise, return $12T39$ and terminate.
\end{enumerate}
\item Else if $b \in \Q^2$, then
\begin{enumerate}\itemsep0em
\item If $3(4b-a^2) \in \Q^2$, then
\begin{enumerate} \itemsep0em
\item If $r(x)$ is reducible, return $12T3$ and terminate.
\item Else if $b \in \Q^3$, return $12T2$ and terminate.
\item Otherwise, return $12T18$ and terminate.
\end{enumerate}
\item Else if $3(4b-a^2) \notin \Q^2$ and either $3(a+2\sqrt{b}) \in \Q^2$ or $3(a-2\sqrt{b}) \in \Q^2$, then
\begin{enumerate} \itemsep0em
\item If $b \in \Q^3$ or $r(x)$ is reducible, return $12T3$ and terminate.
\item Otherwise, return $12T16$ and terminate.
\end{enumerate}
\item Else if $3(4b-a^2) \notin \Q^2$, $3(a+2\sqrt{b}) \notin \Q^2$ and $3(a-2\sqrt{b}) \notin \Q^2$, then
\begin{enumerate} \itemsep0em
\item If $b \in \Q^3$ or $r(x)$ is reducible, return $12T10$ and terminate.
\item Otherwise, return $12T37$ and terminate.
\end{enumerate}
\end{enumerate}
\item Else if $b(a^2-4b) \notin \Q^2$ and $b \notin \Q^2$, then
\begin{enumerate} \itemsep0em
\item If $3(4b-a^2) \in \Q^2$, then
\begin{enumerate} \itemsep0em
\item If $r(x)$ is reducible, return $12T15$ and terminate.
\item Else if $b \in \Q^3$, return $12T14$ and terminate.
\item Otherwise, return $12T42$ and terminate.
\end{enumerate}
\item Else if $3(4b-a^2) \notin \Q^2$ and either $-3b \in \Q^2$ or $3b(4b-a^2) \in \Q^2$, then
\begin{enumerate} \itemsep0em
\item If $b \in \Q^3$, then
\begin{enumerate} \itemsep0em
\item If $-3b \in \Q^2$, return $12T12$ and terminate.
\item Otherwise, return $12T13$ and terminate.
\end{enumerate}
\item Else if $b \notin \Q^3$ and $r(x)$ is reducible, then
\begin{enumerate} \itemsep0em
\item If $3b(4b-a^2) \in \Q^2$, return $12T12$ and terminate.
\item Otherwise, return $12T13$ and terminate.
\end{enumerate}
\item Otherwise, return $12T38$ and terminate.
\end{enumerate}
\item Else if $3(4b-a^2) \notin \Q^2$, $-3b \notin \Q^2$ and $3b(4b-a^2) \notin \Q^2$, then
\begin{enumerate} \itemsep0em
\item If $b \in \Q^3$ or $r(x)$ is reducible, return $12T28$ and terminate.
\item Otherwise, return $12T81$ and terminate.
\end{enumerate}
\end{enumerate}
\end{enumerate}
\end{theorem}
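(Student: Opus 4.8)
The theorem is a synthesis of Propositions \ref{g4}, \ref{g6}, \ref{dodecicgal}, \ref{dodecic1} and \ref{dodecic2}: the plan is to show that the branching of the algorithm is exactly a re-encoding of the case divisions in those results, so the proof reduces to walking down the algorithm, reading off at each leaf the pair $(G_4,G_6)$ and the auxiliary data recorded there, and quoting the relevant item. First I would note that since $f(x)$ is irreducible so are $g_4(x)=x^4+ax^2+b$ and $g_6(x)=x^6+ax^3+b$, since a nontrivial factorisation of either would lift to one of $f(x)$; in particular Propositions \ref{g4} and \ref{g6} apply, and $a^2-4b\notin\Q^2$, so the conditions $b(a^2-4b)\in\Q^2$ and $b\in\Q^2$ cannot hold simultaneously. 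By Proposition \ref{g4} the algorithm's top-level trichotomy is therefore precisely $G_4=4T1$, then $G_4=4T2$, then $G_4=4T3$. I would then establish two further mutual-exclusivity facts. When $G_4=4T3$ one cannot have both $-3b\in\Q^2$ and $3b(4b-a^2)\in\Q^2$: their product is $-9b^2(a^2-4b)$, which would force $a^2-4b\in\Q^2$ and hence $g_4(x)$ reducible. And for irreducible $f(x)$ the conditions ``$b\in\Q^3$'' and ``$r(x)$ reducible'' are incompatible: if $b=\beta^3$ with $\beta\in\Q$ and $r(s)=0$ with $s\in\Q$, then writing $t=s/\beta$ one computes $a^2-4b=(\beta-t^2)^2(t^2-4\beta)$ and checks that $\tfrac{1}{2}(-t+\sqrt{t^2-4\beta})$ is a cube root of $-\tfrac{1}{2}(-a+\sqrt{a^2-4b})$, so that a root of $y^2+ay+b$ is a cube in $\Q(\sqrt{a^2-4b})$ and $g_6(x)$, hence $f(x)=g_6(x^2)$, would be reducible --- equivalently, this non-overlap is forced by the consistency of Proposition \ref{dodecic2}.

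With these in hand, take $G_4=4T1$ (step 1): the lemma preceding Proposition \ref{dodecicgal} excludes $(4T1,6T1)$, $(4T1,6T2)$, $(4T1,6T5)$, so automatically $3(4b-a^2)\notin\Q^2$, and by Proposition \ref{g6} one has $G_6=6T3$ when $b\in\Q^3$ or $r(x)$ is reducible and $G_6=6T9$ otherwise; Proposition \ref{dodecicgal}(1) then gives $12T11$ and $12T39$, which are the two outputs of step 1. For $G_4=4T2$ (step 2) and $G_4=4T3$ (step 3), the test $3(4b-a^2)\in\Q^2$ separates $d\in\Q^2$ from $d\notin\Q^2$. When $d\in\Q^2$, Proposition \ref{g6} makes the priority-ordered tests ``$r(x)$ reducible'', then ``$b\in\Q^3$'', then neither, select $G_6=6T2$, $6T1$, $6T5$, and Proposition \ref{dodecicgal}(2), resp.\ (3), names $\Gal(f)$ as $12T3$, $12T2$, $12T18$, resp.\ $12T15$, $12T14$, $12T42$, in that order; these are steps 2(a) and 3(a). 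When $d\notin\Q^2$ one has $G_6\in\{6T3,6T9\}$, with $G_6=6T3$ exactly when $b\in\Q^3$ or $r(x)$ is reducible.

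For $G_4=4T2$ with $d\notin\Q^2$, the hypothesis $b\in\Q^2$ makes $3(a\pm2\sqrt b)$ rational, and the disjunction ``$3(a+2\sqrt b)\in\Q^2$ or $3(a-2\sqrt b)\in\Q^2$'' is independent of the chosen sign of $\sqrt b$; by Proposition \ref{dodecic1}(1),(2) it distinguishes $12T3$ from $12T10$ when $G_6=6T3$ and $12T16$ from $12T37$ when $G_6=6T9$, matching steps 2(b) and 2(c). For $G_4=4T3$ with $d\notin\Q^2$, Proposition \ref{dodecic1}(3),(4) shows the disjunction ``$-3b\in\Q^2$ or $3b(4b-a^2)\in\Q^2$'' distinguishes $\{12T12,12T13\}$ from $12T28$ when $G_6=6T3$ and $12T38$ from $12T81$ when $G_6=6T9$, matching steps 3(b) and 3(c). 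Inside the $\{12T12,12T13\}$ case, the incompatibility of ``$b\in\Q^3$'' and ``$r(x)$ reducible'' shows that exactly one of steps 3(b)(i) and 3(b)(ii) applies, while step 3(b)(iii) is the residual subcase $b\notin\Q^3$, $r(x)$ irreducible, i.e.\ $G_6=6T9$, giving $12T38$ by Proposition \ref{dodecic1}(4); Proposition \ref{dodecic2}(2) covers $b\in\Q^3$ and Proposition \ref{dodecic2}(1) covers $r(x)$ reducible, in each case returning $12T12$ or $12T13$ according to which of $-3b$, $3b(4b-a^2)$ is a rational square, with the ``otherwise'' branch correct by the mutual exclusivity of those two conditions.

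Putting the pieces together, on any irreducible input the algorithm is forced down the branch corresponding to the true pair $(G_4,G_6)$ and the true outcomes of the finitely many square, cube and reducibility tests, and terminates returning the group named there; hence it returns $\Gal(f)$. I expect the only real difficulty to be organisational --- matching each leaf to its $(G_4,G_6)$ and to the correct item of the cited propositions --- together with verifying that ``$b\in\Q^3$'' and ``$r(x)$ reducible'' cannot co-occur for irreducible $f(x)$, which is what makes the inner ordering in step 3(b), and the joint applicability of the two parts of Proposition \ref{dodecic2}, unambiguous.
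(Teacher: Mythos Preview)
Your argument is correct and follows exactly the paper's approach, which is simply to observe that the theorem is obtained by combining Propositions \ref{g4}, \ref{g6}, \ref{dodecicgal}, \ref{dodecic1} and \ref{dodecic2}; you have in fact gone further than the paper by explicitly verifying the mutual exclusivity of ``$b\in\Q^3$'' and ``$r(x)$ reducible'' for irreducible $f$, a point the paper relies on tacitly for the two cases of Proposition \ref{dodecic2} (and hence steps 3(b)(i) and 3(b)(ii)) to be consistent. One cosmetic slip: the product $(-3b)\cdot 3b(4b-a^2)$ is $+9b^2(a^2-4b)$, not $-9b^2(a^2-4b)$, but this does not affect your conclusion that $a^2-4b\in\Q^2$ would follow.
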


\section*{Acknowledgements}

This paper is adapted from the author’s Master’s dissertation done at Universiti Malaya under the supervision of Angelina Chin and Ta Sheng Tan, to whom the author is grateful for their helpful comments and feedback.

\begin{bibdiv}
  \begin{biblist}

\bib{awtreyoctic}{article}{
	title = {Galois groups of doubly even octic polynomials},
	volume = {19},
	number = {1},
	journal = {J. Algebra Appl.},
	author = {Altmann, Anna},
	author = {Awtrey, Chad},
	author = {Cryan, Sam},
	author = {Shannon, Kiley},
	author = {Touchette, Madeleine},
	year = {2020},
	pages = {2050014}
}

\bib{awtreysextic}{article}{
	title = {Field extensions defined by power compositional polynomials},
	volume = {33},
	number = {2},
	journal = {Missouri J. Math. Sci.},
	author = {Awtrey, Chad},
	author = {Beuerle, James R.},
	author = {Griesbach, Hanna Noelle},
	year = {2021},
	pages = {163--180}
}

\bib{awtreysextic2}{article}{
	title = {Galois groups of even sextic polynomials},
	volume = {63},
	number = {3},
	journal = {Canad. Math. Bull.},
	author = {Awtrey, Chad},
	author = {Jakes, Peter},
	year = {2020},
	pages = {670--676}
}

\bib{awtreyoctic2}{article}{
	title = {An elementary characterization of the Galois group of a doubly even octic polynomial},
	volume = {24},
	number = {10},
	journal = {J. Algebra Appl.},
	author = {Awtrey, Chad},
	author = {Patane, Frank},
	year = {2025},
	pages = {2550248}
}

\bib{awtreyoctic3}{article}{
	title = {On the Galois group of a reciprocal even octic polynomial},
	volume = {52},
	number = {7},
	journal = {Comm. Algebra},
	author = {Awtrey, Chad},
	author = {Patane, Frank},
	year = {2024},
	pages = {3018--3026}
}

\bib{awtreynonic}{article}{
	title = {On Galois groups of power compositional nonic polynomials}
	volume = {},
	number = {},
	journal = {Bull. Aust. Math. Soc.},
	author = {Awtrey, Chad},
	author = {Patane, Frank},
    author = {Toone, Brian},
	year = {},
	pages = {},
    status = {to appear}
}

\bib{butlermckay}{article}{
	title = {The transitive groups of degree up to eleven},
	volume = {11},
	number = {8},
	journal = {Comm. Algebra},
	author = {Butler, Gregory},
	author = {McKay, John},
	year = {1983},
	pages = {863--911}
}

\bib{cavallo}{article}{
	title = {An elementary computation of the Galois groups of symmetric sextic polynomials},
	author = {Cavallo, Alberto},
	year = {2021},
	eprint = {arXiv: 1902.00965},
}

\bib{mypaper}{article}{
	title = {Galois groups of certain even octic polynomials},
	volume = {22},
	number = {12},
	journal = {J. Algebra Appl.},
	author = {Chen, Malcolm Hoong Wai},
	author = {Chin, Angelina Yan Mui},
	author = {Tan, Ta Sheng},
	year = {2023},
	pages = {2350263}
}

\bib{kappewarren}{article}{
	title = {An elementary test for the {Galois} group of a quartic polynomial},
	volume = {96},
	number = {2},
	journal = {Amer. Math. Monthly},
	author = {Kappe, Luise-Charlotte},
	author = {Warren, Bette},
	year = {1989},
	pages = {133--137}
}

\bib{schinzel}{book}{
	title = {Polynomials with special regard to reducibility},
	publisher = {Cambridge University Press},
	author = {Schinzel, Andrzej},
	year = {2000}
}

\bib{soicher}{thesis}{
	title = {The computation of {Galois} groups},
	school = {Concordia University},
	author = {Soicher, Leonard},
	year = {1981},
	type = {Master's thesis}
}

\bib{gap}{webpage}{
    author       = {{The GAP~Group}},
    title        = {{GAP} -- {G}roups, {A}lgorithms, and {P}rogramming,
                    {V}ersion 4.13.1},
    year         = {2024},
    url = {https://www.gap-system.org}
}

\bib{wmath}{webpage}{
  author = {{Wolfram Research, Inc.}},
  title = {Mathematica, {V}ersion 14.0},
  url = {https://www.wolfram.com/mathematica},
  year = {2024}
}

  \end{biblist}
\end{bibdiv}

\end{document}